\definecolor{mycolor}{rgb}{1.0,0,0.2}
\newtheorem{theorem}{Theorem}[section]
\newaliascnt{proposition}{theorem}
\newtheorem{proposition}[proposition]{Proposition}
\newaliascnt{lemma}{theorem}
\newtheorem{lemma}[lemma]{Lemma}
\newaliascnt{corollary}{theorem}
\newtheorem{corollary}[corollary]{Corollary}
\newaliascnt{definition}{theorem}
\newaliascnt{remark}{theorem}
\newtheorem{remark}[remark]{Remark}
\newaliascnt{example}{theorem}
\newtheorem{example}[example]{Example}
\numberwithin{equation}{section}
\newcommand{\rank}{\textrm{rank}}
\newcommand{\vol}{\textrm{vol}}
\newcommand{\End}{$\hfill{\blacksquare}$}
\newcommand{\partialt}{\displaystyle\frac{\partial}{\partial t}}
\newcommand{\prarrow}[2]{\ar@<0.5ex>[r]^-{#1} \ar@<-0.5ex>[r]_-{#2}}
\newcommand{\plarrow}[2]{\ar@<0.5ex>[l]^-{#1} \ar@<-0.5ex>[l]_-{#2}} 
\newcommand{\pdarrow}[2]{\ar@<0.5ex>[d]^-{#1} \ar@<-0.5ex>[d]_-{#2}} 
\newcommand{\puarrow}[2]{\ar@<0.5ex>[u]^-{#1} \ar@<-0.5ex>[u]_-{#2}}
\newcommand{\subscripts}[3]{%
  \@mathmeasure\z@\displaystyle{#2}%
  \global\setbox\@ne\vbox to\ht\z@{}\dp\@ne\dp\z@
  \setbox\tw@\box\@ne
  \@mathmeasure4\displaystyle{\copy\tw@_{#1}}%
  \@mathmeasure6\displaystyle{{#2}_{#3}}%
  \dimen@-\wd6 \advance\dimen@\wd4 \advance\dimen@\wd\z@
  \hbox to\dimen@{}\mathop{\kern-\dimen@\box4\box6}%
}
\begin{document}

\title{Godbillon-Vey classes of regular Jacobi manifolds}
\author{Shuhei Yonehara}
\address{National Institute of Technology, Yonago College, Tottori, 683-8502, JAPAN}
\email{shuhei.yonehara.0201@gmail.com}

\begin{abstract}
  The notion of a Jacobi manifold is a natural generalization of that of a Poisson manifold. A Jacobi manifold has a natural foliation in which each leaf has either a contact structure or a locally conformal symplectic structure. In this paper, we study a characteristic class called the Godbillon-Vey class for Jacobi manifolds with regular foliation, and express it explicitly in terms of Jacobi structures.
\end{abstract}

\maketitle

\tableofcontents

\footnote[0]{2020 \textit{Mathematics Subject Classification}\ : 53D17, 53D10, 53C12}

\section{Introduction}

The notion of a \textit{Jacobi manifold} \cite{lichnerowicz1978varietes} is a natural generalization of that of a Poisson manifold. A Jacobi structure on a manifold is defined by a pair of a vector field $E$ and a bivector field $\pi$ such that \[[\pi,\pi]_S=2E\wedge\pi,\qquad [\pi,E]_S=0,\]
where $[\cdot,\cdot]_S$ denotes the \textit{Schouten bracket}. A Jacobi manifold $(M,\pi,E)$ possesses a foliation $\mathcal{F}_{(\pi,E)}$ defined by a distribution $T\mathcal{F}_{(\pi,E)}=\textrm{Im}\pi^\sharp+\mathbb{R}\langle E\rangle$ where the even-dimensional leaves of the foliation have locally conformal symplectic structures (hereafter referred to as LCS structures), and the odd-dimensional leaves have contact structures. A Jacobi manifold is said to be regular if the foliation is regular, that is, in the cases that the manifold has a regular LCS foliation or a regular contact foliation. In this paper, we refer to the former case as an LCS-type regular Jacobi manifold and the latter case as a contact-type regular Jacobi manifold.

Godbillon and Vey \cite{godbillon1971invariant} defined a characteristic class of regular co-orientable foliations, that is called the \textit{Godbillon-Vey class}. This class is defined as follows: first, for a regular co-orientable foliation $\mathcal{F}$ of codimension $q$, we take a $q$-form $\alpha$ defining the distribution $T\mathcal{F}$. Since the distribution is integrable, there is a 1-form $\beta$ which satisfies $d\alpha=\beta\wedge\alpha$. Then the form $\beta\wedge(d\beta)^q$ is closed. Moreover, the de Rham cohomolory class $[\beta\wedge(d\beta)^q]\in H^{2q+1}_{dR}(M)$ is independent of the choice of the forms $\alpha$ and $\beta$.

Mikami \cite{mikami2000godbillon} explicitly described the Godbillon-Vey classes of the symplectic foliations on regular Poisson manifolds. In this paper, we extend the result to the case of Jacobi manifolds. Specifically, we describe the Godbillon-Vey classes of regular Jacobi manifolds in terms of the tensors $(\pi, E)$. The following is our main result:

\begin{theorem}[\autoref{thm:100},\ \autoref{thm:101}]
  Let $(M,\pi,E)$ be an $n$-dimensional orientable regular Jacobi manifold and fix a volume form ${\rm{vol}}_M$ on $M$. We define an isomorphism between vector bundles $\varphi:\wedge^k TM\to\wedge^{n-k}T^\ast M$ by $\varphi(X)=\iota_X{\rm{vol}}_M$ for each $0\leq k\leq n$. For any $U\in\mathfrak{X}^k(M)$, let $\ast U\in\mathfrak{X}^{n-k}$ be one of multivector fields which satisfies ${\rm vol}_M(U\wedge\ast U)=1$. Then we have the following.
  \begin{enumerate}
    \item For an LCS-type regular Jacobi manifold $(M,\pi,E)$ such that ${\rm rank}T\mathcal{F}_{(\pi,E)}=2m$, we can take
    \[\alpha=\varphi\left(\frac{\pi^m}{m!}\right),\qquad\beta=\varphi[-\ast\pi^m,\pi^m]_S\]
    (so $\mathcal{F}_{(\pi,E)}$ is co-orientable), and the Godbillon-Vey class of $\mathcal{F}_{(\pi,E)}$ is given by 
    \[\big[\varphi(\iota_{(d\varphi[-\ast\pi^m,\pi^m]_S)^{n-2m}}[-\ast\pi^m,\pi^m]_S)\big].\]

    \item For a contact-type regular Jacobi manifold $(M,\pi,E)$ such that ${\rm rank}T\mathcal{F}_{(\pi,E)}=2m+1$, we can take 
    \[\alpha=\varphi\left(\frac{\pi^m}{m!}\wedge E\right),\qquad\beta=\varphi[\ast(\pi^m\wedge E),\pi^m\wedge E]_S\]
    (so $\mathcal{F}_{(\pi,E)}$ is co-orientable), and the Godbillon-Vey class of $\mathcal{F}_{(\pi,E)}$ is given by
    \[\big[\varphi(\iota_{(d\varphi[\ast(\pi^m\wedge E),\pi^m\wedge E]_S)^{n-2m-1}}[\ast(\pi^m\wedge E),\pi^m\wedge E]_S)\big].\]\hfill{$\Box$}
  \end{enumerate}
\end{theorem}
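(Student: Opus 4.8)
The plan is to verify directly that the stated $\alpha$ and $\beta$ are admissible Godbillon--Vey data, and then to rewrite the resulting form $\beta\wedge(d\beta)^q$ through the duality $\varphi$. Throughout I will use the pairing identity
\[\theta\wedge\varphi(V)=\pm\,\varphi(\iota_\theta V)\]
for a $p$-form $\theta$ and a multivector field $V$ with $p\leq\deg V$, which follows from the definition $\varphi(V)=\iota_V{\rm vol}_M$ and the standard relation between interior products of forms and of multivectors against a fixed volume form; pinning down the exact sign $(-1)^{p(n-\deg V)}$ (or a similar one) is a bookkeeping step I would settle first, since the two cases of the theorem differ precisely by such a sign.

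First I would check that $\alpha$ is a defining form for $\mathcal{F}_{(\pi,E)}$. In the LCS case the restriction of $\pi$ to a leaf is the inverse of the leaf's LCS form and is therefore nondegenerate, so $\pi^m/m!$ is a nowhere-zero section of $\wedge^{2m}T\mathcal{F}_{(\pi,E)}$; applying $\varphi$ contracts ${\rm vol}_M$ against all $2m$ leaf directions, so $\iota_X\alpha=0$ exactly when $X\in T\mathcal{F}_{(\pi,E)}$, and $\alpha\neq0$. In the contact case the same argument applies to $\pi^m/m!\wedge E$, using that $E$ spans the Reeb-type direction complementary to ${\rm Im}\,\pi^\sharp$ inside the leaf, so that $\pi^m\wedge E$ trivializes $\wedge^{2m+1}T\mathcal{F}_{(\pi,E)}$.

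The substantive step is the identity $d\alpha=\beta\wedge\alpha$. Writing $\partial=\varphi^{-1}\circ d\circ\varphi$ for the divergence (curl) operator of ${\rm vol}_M$, which is a generator of the Schouten bracket via the Koszul formula $[U,V]_S=(-1)^{|U|}\big(\partial(U\wedge V)-\partial U\wedge V-(-1)^{|U|}U\wedge\partial V\big)$, I have $d\alpha=\varphi\big(\partial(\pi^m/m!)\big)$. By the pairing identity it then suffices to show $\partial(\pi^m/m!)=\pm\,\iota_\beta(\pi^m/m!)$, with $\beta=\varphi[-\ast\pi^m,\pi^m]_S$ in the LCS case and $\beta=\varphi[\ast(\pi^m\wedge E),\pi^m\wedge E]_S$ in the contact case. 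I would compute $\partial(\pi^m/m!)$ by expanding $\partial$ over the product by means of the Koszul formula, reducing everything to $\partial\pi$, the bracket $[\pi,\pi]_S$, and $E$, and then substitute the Jacobi compatibility relations $[\pi,\pi]_S=2E\wedge\pi$ and $[\pi,E]_S=0$. The complementary multivector $\ast\pi^m$ (respectively $\ast(\pi^m\wedge E)$) enters because $\iota_{\varphi(W)}(\pi^m/m!)$ for an $(n-1)$-vector $W$ is governed by the volume pairing of $W$ against the transverse complement of the leaf, which is exactly the data recorded by $\ast$. This is the step I expect to be the main obstacle: organizing the Koszul expansion so that the divergence collapses to a single Schouten bracket of $\pi^m$ (respectively $\pi^m\wedge E$) with its ${\rm vol}_M$-dual, and fixing every sign, including the sign discrepancy between the two cases, which originates in the odd extra factor $E$ and in the parity appearing in the pairing identity.

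Finally I would compute the Godbillon--Vey form. With $\beta=\varphi(W)$, where $W=[-\ast\pi^m,\pi^m]_S$ (respectively $W=[\ast(\pi^m\wedge E),\pi^m\wedge E]_S$), we have $d\beta=d\varphi(W)=\varphi(\partial W)$, and since $\beta$ is a $1$-form the pairing identity applies with $\theta=(d\beta)^q$, giving
\[\beta\wedge(d\beta)^q=\varphi(W)\wedge(d\beta)^q=\pm\,\varphi\big(\iota_{(d\beta)^q}W\big)=\pm\,\varphi\big(\iota_{(d\varphi(W))^q}W\big),\]
which is the displayed expression, with $q=n-2m$ in the LCS case and $q=n-2m-1$ in the contact case; when $2q>n-1$ both sides vanish identically, so the identity holds without restriction. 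Closedness of $\beta\wedge(d\beta)^q$ and the independence of its de Rham class from the choices of $\alpha$ and $\beta$ are precisely the general properties of the Godbillon--Vey class recalled in the introduction, so once the admissibility of $\alpha$ and $\beta$ is established the computed class is the Godbillon--Vey class of $\mathcal{F}_{(\pi,E)}$.
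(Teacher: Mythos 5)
Your proposal follows the same framework as the paper's own proof: the duality $\varphi$, the operator $\partial=\varphi^{-1}\circ d\circ\varphi$ (the paper's $\psi$), the Koszul-type generator formula for the Schouten bracket (\autoref{lem:1}, quoted from Mikami), the reduction of $d\alpha=\beta\wedge\alpha$ to a multivector identity of the form $\psi(\pi^m/m!)=\pm\,\iota_\beta(\pi^m/m!)$, and the final rewriting of $\beta\wedge(d\beta)^q$ through $\varphi$. The problem is that the one step carrying the actual content of the theorem --- verifying that the specific one-form $\beta=\varphi[-\ast\pi^m,\pi^m]_S$ (resp.\ $\varphi[\ast(\pi^m\wedge E),\pi^m\wedge E]_S$), with these precise signs, satisfies $d\alpha=\beta\wedge\alpha$ --- is exactly the step you flag as ``the main obstacle'' and never carry out. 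A plan that reduces everything to this identity and then defers it is a sketch with a gap at its center, not a proof. Moreover, the signs you set aside as bookkeeping are part of the assertion: the two cases genuinely differ ($-\ast\pi^m$ versus $+\ast(\pi^m\wedge E)$), and the unresolved $\pm$ propagates into your final formula, whereas the theorem asserts the representative $\varphi(\iota_{(d\beta)^q}\varphi^{-1}(\beta))$ with a definite sign (which the paper checks is exactly $+1$ using its identity (\ref{eq:-1})).

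Concretely, what is missing is the mechanism that makes the bracket $[\ast\pi^m,\pi^m]_S$ appear at all. Expanding $\psi(\pi^m)$ by the generator formula and the Jacobi relations only produces $\psi(\pi^m)=m\,\psi(\pi)\wedge\pi^{m-1}+m(m-1)E\wedge\pi^{m-1}$ (\autoref{cor:0}); no reorganization of this expression yields a Schouten bracket of $\pi^m$ with its $\vol_M$-dual. The paper's route needs three further ingredients. First, since $\vol_M(\pi^m\wedge\ast\pi^m)=1$ is constant, one has $\psi(\ast\pi^m\wedge\pi^m)=\varphi^{-1}d(1)=0$, and applying \autoref{lem:1} to this identity gives $[\ast\pi^m,\pi^m]_S=-\psi(\ast\pi^m)\wedge\pi^m-\ast\pi^m\wedge\psi(\pi^m)$: this is the only place where $\ast\pi^m$ and its bracket enter. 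Second, after contracting with $\alpha$, the term $\iota_\alpha(\psi(\ast\pi^m)\wedge\pi^m)$ vanishes because $\alpha$ kills every $q$-vector containing a leaf direction; to justify this (and to show $\alpha$ is a locally decomposable defining form in the first place) the paper invokes the local structure theorem of Dazord--Lichnerowicz--Marle, writing $\pi^m=m!\,a^m X_1\wedge\cdots\wedge X_{2m}$ in an adapted frame --- your leafwise-nondegeneracy argument covers the kernel computation but not this vanishing. Third, the remaining term collapses to $\psi(\pi^m)/m!$ only once one knows $\psi(\pi)\wedge\pi^m=0$, which in the LCS case requires both $\psi(\pi^{m+1})=0$ and $E\wedge\pi^m=0$ (i.e.\ $E\in{\rm Im}\,\pi^\sharp$), and in the contact case the analogous identity $\psi(\pi^{m+1}\wedge E)=0$. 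These are precisely the points where the Jacobi compatibility conditions and the dichotomy between the two types are used; your proposal names the inputs but never assembles them, so the identity $d\alpha=\beta\wedge\alpha$ remains unproven.
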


In this paper, we prove the above results based on the method in \cite{mikami2000godbillon} and provide an alternative proof for contact-type regular Jacobi manifolds using Poissonization of them. Furthermore, as specific examples, we examine the Godbillon-Vey classes in detail for the codimension one case.

\vspace{0.2in}
\noindent{\bf Acknowledgments.} This work was supported by JSPS KAKENHI Grant Number JP23KJ1487.

\section{Jacobi manifolds}

A \textit{Jacobi structure} on a manifold $M$ is a bracket $\{\cdot,\cdot\}:C^\infty(M)\times C^\infty(M)\to C^\infty(M)$ which satisfies the following conditions:
\begin{enumerate}
  \item $\{\cdot,\cdot\}$ is $\mathbb{R}$-bilinear, skew-symmetric,
  \vspace{0.03in}
  \item $\{\{f,g\},h\}+\{\{g,h\},f\}+\{\{h,f\},g\}=0\qquad\text{(Jacobi identity),}$
  \vspace{0.03in}
  \item $\textrm{supp}\{f,g\}\subset \textrm{supp}(f)\cap\textrm{supp}(g)$.
\end{enumerate}

\vspace{0.1in}
We denote the set of all $k$-forms on a manifold $M$ by $\Omega^k(M)$, and the set of all $k$-vectors on $M$ by $\mathfrak{X}^k(M)$, where $0\leq k\leq\dim M$.

\vspace{0.1in}
A Jacobi structure on $M$ is also characterized by a pair $(\pi,E)$ of a bivector $\pi\in\mathfrak{X}^2(M)$ and a vector field $E\in\mathfrak{X}(M)$ which satisfies
\[[\pi,\pi]_S=2E\wedge\pi,\qquad [\pi,E]_S=0,\]
where $[\cdot,\cdot]_S:\mathfrak{X}^k(M)\times\mathfrak{X}^l(M)\to\mathfrak{X}^{k+l-1}(M)$ denotes the \textit{Schouten bracket}, which is defined by

\[[X_1\wedge\cdots\wedge X_k,Y_1\wedge\cdots\wedge Y_l]_S=\displaystyle\sum_{i,j}(-1)^{i+j}[X_i,Y_j]\wedge X_1\wedge\cdots\wedge \check{X_{i}}\wedge\cdots\wedge X_k\wedge Y_1\wedge\cdots\wedge \check{Y_{j}}\wedge\cdots\wedge Y_l.\]
For $X\in\mathfrak{X}^1(M)=\mathfrak{X}(M)$ and $f,g\in\mathfrak{X}^0(M)=C^\infty(M)$, we define $[X,f]_S=\mathcal{L}_Xf=Xf$ and $[f,g]_S=0$. This bracket has the following property:
\begin{equation}\label{eq:-3}
  [U,V]_S=-(-1)^{(k-1)(l-1)}[V,U]_S,
\end{equation}
\begin{equation}\label{eq:-2}
  [U,V\wedge W]_S=[U,V]_S\wedge W+(-1)^{(k-1)l}V\wedge[U,W]_S
\end{equation}
for $U\in\mathfrak{X}^k(M),V\in\mathfrak{X}^l(M)$ and $W\in\mathfrak{X}^\bullet(M)$.

\vspace{0.1in}
Two definitions of Jacobi structures are related by the formula
\[\{f,g\}=\pi(df,dg)+f(Eg)-g(Ef).\]
In the case of $E=0$, the Jacobi structure turn to be a Poisson structure and the corresponding bracket satisfies the Leibniz identity
\[\{f,gh\}=\{f,g\}h+g\{f,h\}.\]

\begin{example}[Contact manifolds]
  A \textit{contact structure} on a $(2n+1)$-dimensional manifold $M$ is a 1-form $\theta$ which satisfies $\theta\wedge(d\theta)^n\neq0$.
  On a contact manifold $(M,\theta)$, there is a unique vector field $E$ (called the \textit{Reeb vector field}) which satisfies 
  \[d\theta(E,-)=0,\qquad\theta(E)=1.\]
  Moreover, we can define an isomorphism $\flat:TM\to T^\ast M$ by
  \[\flat(X)=d\theta(X,-)+\theta(X)\theta(-)\]
  and a bivector $\pi$ by
  \[\pi(\alpha,\beta)=d\theta(\flat^{-1}(\alpha),\flat^{-1}(\beta)).\]
  Then the pair $(\pi,E)$ is a Jacobi structure on $M$.\End
\end{example}

\begin{example}[Locally conformal symplectic (LCS) manifolds]
  A \textit{locally conformal symplectic (LCS) structure} on a 2n-dimensional manifold $M$ is a pair $(\omega,\Omega)$ of a 1-form $\omega$ and a 2-form $\Omega$ which satisfies
  \[\Omega^n\neq0,\qquad d\omega=0,\qquad d\Omega=\omega\wedge\Omega.\]
  On an LCS manifold $(M,\omega,\Omega)$, there is a unique vector field $E$ and a unique bivector field $\pi$ such that
  \[\iota_E\Omega=-\omega,\qquad\iota_{\pi^\sharp\alpha}\Omega=-\alpha\quad(\alpha\in T^\ast M),\]
  where $\pi^\sharp:T^\ast M\to TM$ is the induced map by the interior product. Then the pair $(\pi,E)$ is a Jacobi structure on $M$.\End
\end{example}

For a Jacobi manifold $(M,\pi,E)$, a distribution given by $T\mathcal{F}_{(\pi,E)}=\textrm{Im}\pi^\sharp+\mathbb{R}\langle E\rangle$ is integrable, and thus defines a (possibly singular) foliation $\mathcal{F}_{(\pi,E)}$ on $M$. If $E_x \in \textrm{Im}\pi^\sharp_x$ holds for a point $x\in M$, a leaf through $x$ of the foliation $\mathcal{F}_{(\pi,E)}$ is even-dimensional and an LCS manifold. On the other hand, the leaf through $x$ is odd-dimensional and a contact manifold if $E_x \notin \textrm{Im}\pi^\sharp_x$. A Jacobi manifold $(M,\pi,E)$ is said to be \textit{regular} if the foliation $\mathcal{F}_{(\pi,E)}$ is regular. Regular Jacobi manifolds are classified into two cases:

\begin{itemize}
  \item The case where each leaf of $\mathcal{F}_{(\pi,E)}$ is even-dimensional (LCS manifold).
  \item The case where each leaf of $\mathcal{F}_{(\pi,E)}$ is odd-dimensional (contact manifold).
\end{itemize} We refer to the former case as an \textit{LCS-type regular Jacobi manifold} and the latter case as a \textit{contact-type regular Jacobi manifold}.

\vspace{0.1in}
Let $(M_1,\{\cdot,\cdot\}_1),(M_2,\{\cdot,\cdot\}_2)$ be Jacobi manifolds and $J:M_1\to M_2$ a smooth map. Then $J$ is said to be a \textit{conformal Jacobi map} if there is a non-zero function $a\in C^\infty(M_1)$ such that
\[\{f,g\}_2\circ J=\frac{1}{a}\{a\cdot(f\circ J),a\cdot(g\circ J)\}_1\]
holds for all $f,g\in C^\infty(M_2)$. If $a=1$, $J$ is called a \textit{Jacobi map}. When $(\pi_1,E_1),(\pi_2,E_2)$ are the corresponding tensors of Jacobi manifolds $(M_1,\{\cdot,\cdot\}_1),(M_2,\{\cdot,\cdot\}_2)$ respectively, a map $J:M_1\to M_2$ is a conformal Jacobi map if and only if 
\[J_\ast\pi_1=a\pi_2,\qquad J_\ast E_1=aE_2+\pi_2^\sharp(da)\]
holds for some non-zero function $a\in C^\infty(M_2)$. Two Jacobi manifolds $M_1,M_2$ are said to be \textit{equivalent (resp. conformally equivalent)} if there is a Jacobi map (resp. conformal Jacobi map) $J:M_1\to M_2$ which is diffeomorphic.

\section{Godbillon-Vey classes of regular foliations}

Let $M$ be a $n$-dimensional manifold and $\mathcal{F}$ a coorientable regular foliation of codimension $q\ (0<q<n)$ on $M$. Then there is a $q$-form $\alpha$ which is locally decomposable and satisfies $d\alpha=\beta\wedge\alpha$ for some 1-form $\beta$, such that the distribution $T\mathcal{F}$ is characterized by
\[T\mathcal{F}=\{X\in\mathfrak{X}(M)\mid \omega(X)=0,\ \omega\in\mathcal{I}\},\]
where $\mathcal{I}$ is defined by
\[\mathcal{I}=\{\omega\in\Omega^1(M)\mid \omega\wedge\alpha=0\}.\]
Since $q\neq n$, we have $\alpha\neq0$.

The forms $\alpha$ and $\beta$ (we call a pair $(\alpha,\beta)$ a \textit{defining pair} of $\mathcal{F}$) are not uniquely determined by $\mathcal{F}$ . For example, in the case of $q=1$, if $(\alpha,\beta)$ and $(\alpha',\beta')$ are two defining pairs of $\mathcal{F}$, there are a function $u\in C^\infty(M)$ and a non-zero function $v\in C^\infty(M)$ such that
\[\alpha'=v\alpha,\qquad\beta'=\beta-d\log|v|+uv\alpha\]
holds. However, we can define an invariant of $\mathcal{F}$ by using them:

\begin{theorem}[\cite{godbillon1971invariant}]
  The cohomology class $[\beta\wedge (d\beta)^q]\in H^{2q+1}_{dR}(M)$ is independent on the choice of the forms $\alpha,\beta$, and thus uniquely determined by the foliation $\mathcal{F}$.\hfill{$\Box$}
\end{theorem}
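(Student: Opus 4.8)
The plan is to prove two things: that $\beta\wedge(d\beta)^q$ is closed, so that its de Rham class is defined, and that this class is independent of the defining pair $(\alpha,\beta)$. Both rest on one structural observation. Applying $d$ to $d\alpha=\beta\wedge\alpha$ gives $0=d(\beta\wedge\alpha)=d\beta\wedge\alpha-\beta\wedge d\alpha=d\beta\wedge\alpha-\beta\wedge\beta\wedge\alpha=d\beta\wedge\alpha$, so $d\beta\wedge\alpha=0$. Writing $\alpha=\omega^1\wedge\cdots\wedge\omega^q$ locally with the $\omega^i$ spanning the conormal bundle, this forces $d\beta$ into the ideal $\mathcal{J}$ generated by $\omega^1,\dots,\omega^q$, i.e.\ $d\beta=\sum_i\omega^i\wedge\gamma_i$. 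I would record two consequences used throughout: first, any product of $q+1$ forms each lying in $\mathcal{J}$ vanishes, since each factor supplies one of only $q$ independent $\omega^i$, so $(d\beta)^{q+1}=0$; second, every monomial of $(d\beta)^q$ carries all of $\omega^1,\dots,\omega^q$, hence $(d\beta)^q$ lies in the ideal generated by $\alpha$, so that $\phi\wedge(d\beta)^q=0$ whenever $\phi\wedge\alpha=0$. Closedness is then immediate: $d\big(\beta\wedge(d\beta)^q\big)=(d\beta)^{q+1}-\beta\wedge d\big((d\beta)^q\big)=(d\beta)^{q+1}=0$.

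For independence I would first pin down the ambiguity. Coorientability gives a global nonvanishing $\alpha$, and any second defining $q$-form is $\alpha'=g\alpha$ for a nowhere-vanishing $g\in C^\infty(M)$, because $\alpha'_x$ and $\alpha_x$ generate the same line $\wedge^q(\mathrm{Ann}\,T_x\mathcal{F})$. Comparing $d\alpha'=\beta'\wedge\alpha'$ with $d\alpha'=d(g\alpha)=(dg+g\beta)\wedge\alpha$ yields $(\beta'-\beta-d\log|g|)\wedge\alpha=0$, so $\beta'=\beta+d\log|g|+\mu$ with $\mu\wedge\alpha=0$. Here $\log|g|$ is a genuine global smooth function precisely because $g$ never vanishes, and this is the point that ultimately makes the argument go through.

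The heart of the proof is a transgression along the segment $\beta_s=\beta+s\eta$, with $\eta=\beta'-\beta$ and $s\in[0,1]$. Since $d\beta\wedge\alpha=0$ and, by the same computation applied to the primed pair, $d\beta'\wedge\alpha'=0$ (hence $d\beta'\wedge\alpha=0$ as $g\neq0$), we get $d\eta\wedge\alpha=0$ and so $d\beta_s\wedge\alpha=0$ for every $s$; thus the structural consequences above hold verbatim for $\beta_s$. In particular $\mu\wedge(d\beta_s)^q=0$, so $\eta\wedge(d\beta_s)^q=d\log|g|\wedge(d\beta_s)^q=d\big(\log|g|\,(d\beta_s)^q\big)$ is exact. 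Differentiating gives $\tfrac{d}{ds}\big(\beta_s\wedge(d\beta_s)^q\big)=\eta\wedge(d\beta_s)^q+q\,\beta_s\wedge(d\beta_s)^{q-1}\wedge d\eta$, while a Leibniz expansion gives $d\big(\eta\wedge\beta_s\wedge(d\beta_s)^{q-1}\big)=\beta_s\wedge(d\beta_s)^{q-1}\wedge d\eta-\eta\wedge(d\beta_s)^q$. The two summands of the first identity are therefore each exact, so $\tfrac{d}{ds}\big(\beta_s\wedge(d\beta_s)^q\big)$ is exact with a primitive depending polynomially on $s$. Integrating over $s\in[0,1]$ and commuting $d$ with $\int ds$ shows that $\beta'\wedge(d\beta')^q-\beta\wedge(d\beta)^q$ is exact, which is the assertion.

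I expect the main obstacle to be the organization of this last step rather than any single hard estimate: one must verify that \emph{every} term produced by differentiating in $s$ is exact, and the only way the non-closed part $\mu$ of $\eta$ could obstruct exactness is through the term $\eta\wedge(d\beta_s)^q$, where it is killed precisely because $(d\beta_s)^q$ is divisible by $\alpha$. Careful bookkeeping of the signs in the graded Leibniz rule, and confirming that $\log|g|$ together with all the intermediate primitives are globally defined, are the remaining details that need attention.
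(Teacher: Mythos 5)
Your proof is correct and complete. Note that the paper itself offers no proof of this theorem: it cites Godbillon--Vey and refers the reader to the book of Moerdijk--Mr\v{c}un, so there is no internal argument to compare against, and what you have produced is a genuine, self-contained proof of the cited result. Your two pillars both check out: (i) differentiating $d\alpha=\beta\wedge\alpha$ gives $d\beta\wedge\alpha=0$, and the local coframe argument correctly places $d\beta$ (and, by linearity, every $d\beta_s=d\beta+s\,d\eta$) in the ideal generated by $\omega^1,\dots,\omega^q$; the pigeonhole count then yields both $(d\beta)^{q+1}=0$ (hence closedness) and the divisibility of $(d\beta_s)^q$ by $\alpha$, which is exactly what kills the non-exact part $\mu$ of $\eta$; (ii) the transgression identities $\eta\wedge(d\beta_s)^q=d\bigl(\log|g|\,(d\beta_s)^q\bigr)$ and $d\bigl(\eta\wedge\beta_s\wedge(d\beta_s)^{q-1}\bigr)=\beta_s\wedge(d\beta_s)^{q-1}\wedge d\eta-\eta\wedge(d\beta_s)^q$ are right (using that $(d\beta_s)^q$ is closed and that $2$-forms commute), so integrating in $s$ produces a global primitive and hence exactness of $\beta'\wedge(d\beta')^q-\beta\wedge(d\beta)^q$. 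Compared with the classical treatment, which expands this difference directly (easy for $q=1$, combinatorially awkward in general), your homotopy $\beta_s=\beta+s\eta$ scales cleanly to arbitrary codimension at essentially no extra cost. Two minor remarks: your reduction of the ambiguity to $\alpha'=g\alpha$ with $g$ nowhere zero does use that defining $q$-forms are pointwise decomposable with kernel exactly $T\mathcal{F}$, which is precisely the paper's definition of a defining pair, so that step is justified; and your sign $\beta'=\beta+d\log|g|+\mu$ is the correct one for the convention $d\alpha=\beta\wedge\alpha$ (the transformation rule $\beta'=\beta-d\log|v|+uv\alpha$ displayed in Section~3 of the paper has the opposite sign, a harmless discrepancy since only the exactness of the $d\log$ term enters either argument).
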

The cohomology class $[\beta\wedge (d\beta)^q]\in H^{2q+1}_{dR}(M)$ is called the \textit{Godbillon-Vey class} of the foliation $\mathcal{F}$ and denoted by $GV(\mathcal{F})$. About a proof of the theorem above, see \cite{moerdijk2003introduction} for example.

\vspace{0.1in}
Let $N,M$ be manifolds, $f:N\to M$ a smooth map, and $\mathcal{F}$ a regular foliation on $M$ of codimension $q$. Assume that the map $f$ is transversal to $\mathcal{F}$, namely, 
\[(df)_x(T_xN)+T_{f(x)}\mathcal{F}=T_{f(x)}M\]
holds for any $x\in N$. Then we obtain a regular foliation $f^\ast\mathcal{F}$ on $N$, which is called the \textit{pullback foliation} of $\mathcal{F}$ by $f$ (see \cite{moerdijk2003introduction} for details). The pullback foliation satisfies $\textrm{codim}f^\ast\mathcal{F}=q$ and $T(f^\ast\mathcal{F})=(f_\ast)^{-1}(T\mathcal{F})$.

For a coorientable regular foliation $\mathcal{F}$, let $(\alpha,\beta)$ be a defining pair. Then a pair $(f^\ast\alpha,f^\ast\beta)$ is a defining pair of $f^\ast\mathcal{F}$ (and thus $f^\ast\mathcal{F}$ is also coorientable). Therefore the Godbillon-Vey classes of $\mathcal{F}$ and $f^\ast\mathcal{F}$ satisfies the naturality condition:
\[GV(f^\ast\mathcal{F})=f^\ast GV(\mathcal{F}).\]

\begin{lemma}\label{lem:2}
  Let $(A,B)$ be a defining pair of the pullback foliation $f^\ast\mathcal{F}$ consists of $A\in\Omega^q(N)$ and $B\in\Omega^1(N)$. Assume that $f$ is a surjective submersion and a form $\eta\in\Omega^1(M)$ satisfies $f^\ast\eta=B$. Then the form $\eta$ gives the Godbillon-Vey class of $\mathcal{F}$, namely, $GV(\mathcal{F})=[\eta\wedge(d\eta)^q]$ holds.
\end{lemma}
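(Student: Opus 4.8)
The plan is to deduce the result from the naturality of the Godbillon-Vey class together with the injectivity of $f^\ast$, so that the computation carried out upstairs on $N$ descends to $M$. Before producing a cohomology class I would first verify that $\eta\wedge(d\eta)^q$ is genuinely closed on $M$. Since $(A,B)$ is a defining pair of $f^\ast\mathcal{F}$ we have $dA=B\wedge A$, and differentiating gives $d\beta$-type relation $dB\wedge A=0$; writing $A$ locally as a decomposable $q$-form $\alpha_1\wedge\cdots\wedge\alpha_q$ spanning the conormal bundle, this forces $dB$ to lie in the ideal generated by $\alpha_1,\dots,\alpha_q$, whence $(dB)^{q+1}=0$ by a pigeonhole argument on the repeated factors. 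Using $B=f^\ast\eta$ one gets $f^\ast\big((d\eta)^{q+1}\big)=(dB)^{q+1}=0$, and since $f$ is a surjective submersion $f^\ast$ is injective on forms, so $(d\eta)^{q+1}=0$ and therefore $d\big(\eta\wedge(d\eta)^q\big)=(d\eta)^{q+1}=0$. Hence $[\eta\wedge(d\eta)^q]\in H^{2q+1}_{dR}(M)$ is well defined.

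Next I would compute the pullback of both sides. By the definition of the Godbillon-Vey class applied to the defining pair $(A,B)$ of $f^\ast\mathcal{F}$ we have $GV(f^\ast\mathcal{F})=[B\wedge(dB)^q]$. Substituting $B=f^\ast\eta$ and $dB=f^\ast d\eta$ gives
\[ B\wedge(dB)^q = f^\ast\eta\wedge\big(f^\ast d\eta\big)^q = f^\ast\big(\eta\wedge(d\eta)^q\big), \]
so that $GV(f^\ast\mathcal{F})=f^\ast\big[\eta\wedge(d\eta)^q\big]$. On the other hand the naturality condition established just above the lemma gives $GV(f^\ast\mathcal{F})=f^\ast GV(\mathcal{F})$. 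Comparing the two expressions yields the identity $f^\ast GV(\mathcal{F})=f^\ast\big[\eta\wedge(d\eta)^q\big]$ in $H^{2q+1}_{dR}(N)$. I note in passing that the naive alternative of realizing $\eta$ directly as the $\beta$ of a defining pair $(\alpha_M,\eta)$ on $M$ runs into a solvability obstruction (one would need to solve $d\log|w|\equiv\eta-\beta$ modulo the conormal ideal), which is why I prefer the cohomological route.

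Finally I would strip the $f^\ast$ to obtain the equality in $H^{2q+1}_{dR}(M)$, and this last step is the main obstacle. Injectivity of $f^\ast$ on forms is immediate, but passing to cohomology requires injectivity of $f^\ast\colon H^{2q+1}_{dR}(M)\to H^{2q+1}_{dR}(N)$, which fails for a general surjective submersion (the Hopf fibration $S^3\to S^2$ already kills $H^2$). The clean way to secure it is via a section $s\colon M\to N$ of $f$, since then $s^\ast f^\ast=\mathrm{id}$ forces $f^\ast$ to be injective; this is exactly the situation in the intended application, where $f$ is the projection of a Poissonization and hence admits an obvious section and is in fact a homotopy equivalence. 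Granting injectivity, the displayed identity collapses to $GV(\mathcal{F})=[\eta\wedge(d\eta)^q]$, as claimed. I expect the closedness bookkeeping to be routine and the only genuine care to be needed in justifying this descent.
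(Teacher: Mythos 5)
Your first two steps are sound: the closedness of $\eta\wedge(d\eta)^q$ via $dB\wedge A=0\Rightarrow(dB)^{q+1}=0$ together with injectivity of $f^\ast$ on forms, and the identity $f^\ast GV(\mathcal{F})=f^\ast\big[\eta\wedge(d\eta)^q\big]$ from naturality. But the final descent step is a genuine gap, and you have named it yourself: the lemma assumes only that $f$ is a surjective submersion, and for such maps $f^\ast$ need not be injective on $H^{2q+1}_{dR}$ (your Hopf example). Repairing this by postulating a section of $f$ proves a statement with an extra hypothesis, not the lemma as stated; it happens to cover the intended application (the Poissonization projection $M\times\mathbb{R}_{>0}\to M$ is a homotopy equivalence), but the cohomological route simply cannot be closed from the stated hypotheses.

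The missing idea is to work at the level of defining pairs rather than cohomology classes --- precisely the ``naive alternative'' you set aside, which is what the paper does, and there is no solvability obstruction. Take any defining pair $(\alpha,\beta)$ of $\mathcal{F}$; then $(f^\ast\alpha,f^\ast\beta)$ and $(A,B)$ are two defining pairs of $f^\ast\mathcal{F}$, so (for $q=1$, the general case being analogous) $f^\ast\eta=B=f^\ast\beta-d\log|\widetilde v|+\widetilde u\widetilde v\, f^\ast\alpha$ for some functions $\widetilde u,\widetilde v$ on $N$. For a vertical vector field $X$ (tangent to the fibers of $f$, hence tangent to the leaves of $f^\ast\mathcal{F}$) one has $\iota_X f^\ast\omega=0$ and $\mathcal{L}_X f^\ast\omega=\iota_X f^\ast d\omega+d\,\iota_X f^\ast\omega=0$ for every form $\omega$ on $M$; contracting the displayed relation with $X$ gives $d\widetilde v(X)=0$, and Lie-differentiating it then gives $(\mathcal{L}_X\widetilde u)\,\widetilde v\, f^\ast\alpha=0$, hence $\mathcal{L}_X\widetilde u=0$. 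Thus $\widetilde u,\widetilde v$ are basic, $\widetilde u=f^\ast u$ and $\widetilde v=f^\ast v$, and injectivity of $f^\ast$ on forms lets the relation descend to $\eta=\beta-d\log|v|+uv\alpha$ on $M$. This exhibits $(v\alpha,\eta)$ as a defining pair of $\mathcal{F}$, so Godbillon--Vey's theorem gives $GV(\mathcal{F})=[\eta\wedge(d\eta)^q]$ directly, with no injectivity on cohomology needed. In short, the functions $u,v$ you feared you could not solve for exist automatically, because their pullbacks do.
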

\begin{proof}
  For simplicity, we assume that $q=1$ (we can prove for general $q$ as the same way). Let $(\alpha,\beta)$ be any defining pair of $\mathcal{F}$ consists of $\alpha\in\Omega^1(M)$ and $\beta\in\Omega^1(M)$. Then $(f^\ast\alpha,f^\ast\beta)$ is a defining pair of $f^\ast\mathcal{F}$, and thus we have
  \begin{equation}\label{eq:-1.5}
    f^\ast\eta=B=f^\ast\beta-d\log|\widetilde{v}|+\widetilde{u}\widetilde{v}f^\ast\alpha
  \end{equation}
  for some function $\widetilde{u}\in C^\infty(N)$ and some non-zero function $\widetilde{v}\in C^\infty(N)$. 
  
  From (\ref{eq:-1.5}), we can see that the functions $\widetilde{u},\widetilde{v}$ are basic, namely, there are a function $u\in C^\infty(M)$ and a non-zero function $v\in C^\infty(M)$ such that $\widetilde{u}=f^\ast u,\ \widetilde{v}=f^\ast v.$ In fact, for a vector field $X$ which is along each leaf, we have
  \[0=(f^\ast\eta)(X)-(f^\ast\beta)(X)=-\dfrac{d\widetilde{v}(X)}{\widetilde{v}}+\widetilde{u}\widetilde{v}(f^\ast\alpha)(X)=-\dfrac{d\widetilde{v}(X)}{\widetilde{v}}\]
  and thus $\widetilde{v}$ is constant on each leaf, namely, $\widetilde{v}$ is basic. Moreover, taking the Lie derivative of both sides of (\ref{eq:-1.5}) by a vector field $X$ which is along each leaf, we obtain
  \[(\mathcal{L}_X\widetilde{u})\widetilde{v}f^\ast\alpha+\widetilde{u}(\mathcal{L}_X\widetilde{v})f^\ast\alpha+\widetilde{u}\widetilde{v}\mathcal{L}_Xf^\ast\alpha=0\]
  and this implies $\mathcal{L}_X\widetilde{u}=0$ since $\widetilde{v}\neq0$ and $\alpha\neq0$. Therefore $\widetilde{u}$ is also basic.

  Then since $f$ is a surjective submersion, we obtain
  \[\eta=\beta-d\log|v|+uv\alpha,\]
  and thus $(v\alpha,\eta)$ is a defining pair of $\mathcal{F}$.
\end{proof}

Let us briefly examine the foliation determined by a Jacobi manifold. We can observe the following.

\begin{proposition}
  Let $(M_1,\pi_1,E_2)$ and $(M_2,\pi_2,E_2)$ be Jacobi manifolds, and $J:M_1\to M_2$ a conformally equivalence map. If one of $(M_1,\pi_1,E_2)$ and $(M_2,\pi_2,E_2)$ is regular, then so is the other. Moreover, $J^\ast\mathcal{F}_{(\pi_2,E_2)}=\mathcal{F}_{(\pi_1,E_1)}$ holds.
\end{proposition}
\begin{proof}
  Since $J$ is a diffeomorphism, the relation
$J_* \pi_1 = a \pi_2$ with $a \in C^\infty(M_1)$ a non-zero function implies that $J_* X \in \operatorname{Im}\pi_2^\sharp$ for any $X \in \operatorname{Im} \pi_1^\sharp$. Combining this with $J_* E_1 = a E_2 + \pi_2^\sharp(da)$, we obtain $J_*\bigl(T\mathcal{F}_{(\pi_1,E_1)}\bigr)\subset T\mathcal{F}_{(\pi_2,E_2)}$. Applying the same argument to $J^{-1}$, we conclude that $J_*\bigl(T\mathcal{F}_{(\pi_1,E_1)}\bigr)=T\mathcal{F}_{(\pi_2,E_2)}$.
\end{proof}

\begin{corollary}\label{cor:-1}
 In the same setting as in the previous proposition, if one of $\mathcal{F}_{(\pi_1,E_1)}$ and $\mathcal{F}_{(\pi_2,E_2)}$ is coorientable, then so is the other. Moreover, $GV(\mathcal{F}_{(\pi_1,E_1)})=J^\ast GV(\mathcal{F}_{(\pi_2,E_2)})$ holds.\hfill{$\Box$}
\end{corollary}

\begin{example}
  Mikami \cite{mikami2000godbillon} constructed a Poisson structure such that the Godbillon-Vey class is nontrivial on a compact manifold obtained as a quotient by a discrete subgroup of $PSL(2,\mathbb{R})$. By \autoref{cor:-1}, it follows that, by applying a conformal transformation (the case that the identity map is a conformally equivalence map) to this Poisson structure, one obtains a Jacobi structure on the same manifold whose Godbillon-Vey class is also nontrivial.\End
\end{example}

\section{Main results}
\subsection{Notations}
Hereafter, let $M$ be an orientable $n$-dimensional manifold. We fix a volume form ${\rm{vol}}_M$ on $M$. For non-zero multivector field $U\in\mathfrak{X}^k(M)$, we take a (not unique) multivector field $\ast U\in\mathfrak{X}^{n-k}(M)$ which satisfies $\textrm{vol}_M(U\wedge\ast U)=1$. We define an isomorphism between vector bundles $\varphi:\mathfrak{X}^k(M)\to\Omega^{n-k}(M)$ by $\varphi(X)=\iota_X{\rm{vol}}_M$ for each $0\leq k\leq n$. Then we have
\begin{equation}\label{eq:-1}
  \varphi^{-1}(\alpha)=(-1)^{k(n+1)}\iota_\alpha\varphi^{-1}(1)
\end{equation}
for $\alpha\in\Omega^k(M)$, and
\begin{equation}\label{eq:0}
  \varphi^{-1}(\varphi(U)\wedge\varphi(V))=(-1)^{(n+k)(l+1)}\iota_{\varphi(U)}V=(-1)^{(n+1)(n+l)}\iota_{\varphi(V)}U
\end{equation}
for $U\in\mathfrak{X}^k(M)$ and $V\in\mathfrak{X}^l(M)$.

\vspace{0.1in}
We define a map $\psi:\mathfrak{X}^k(M)\to\mathfrak{X}^{k-1}(M)$ by $\psi(U)=\varphi^{-1}d\varphi(U)$.

\begin{lemma}[\cite{mikami2000godbillon}]\label{lem:1}
  For any $U\in\mathfrak{X}^k(M)$ and $V\in\mathfrak{X}^l(M)$,
  \[\psi(U\wedge V)=(-1)^l[U,V]_S+(-1)^l\psi(U)\wedge V+U\wedge\psi(V)\]
  holds.
\end{lemma}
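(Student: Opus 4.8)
The plan is to recognize $\psi=\varphi^{-1}\circ d\circ\varphi$ as the divergence operator attached to $\vol_M$ and to prove that it generates the Schouten bracket, which is exactly the content of the lemma. Since $\psi$ is $\mathbb{R}$-linear and every multivector field is locally a sum of decomposable ones, it suffices to treat $U=X_1\wedge\cdots\wedge X_k$, and I would argue by induction on $k$. The engine of the whole argument is Cartan calculus on forms together with the generalized commutation rule $[\mathcal{L}_X,\iota_V]=\iota_{[X,V]_S}$ for a vector field $X$ and a multivector field $V$ (itself provable by induction on $\deg V$ from the classical identity $[\mathcal{L}_X,\iota_Y]=\iota_{[X,Y]}$ for vector fields).

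First I would dispose of the base cases. For $f\in\mathfrak{X}^0(M)$ we have $\varphi(f)=f\,\vol_M$, so $d\varphi(f)=df\wedge\vol_M=0$ and hence $\psi(f)=0$. For a vector field $X$, Cartan's formula and $d\vol_M=0$ give $d\varphi(X)=d\iota_X\vol_M=\mathcal{L}_X\vol_M=(\mathrm{div}\,X)\,\vol_M$, so $\psi(X)=\mathrm{div}\,X\in\mathfrak{X}^0(M)$. The decisive base case is $k=1$: expressing $\varphi(X\wedge V)$ through $\iota_X\varphi(V)$ and applying $d\iota_X=\mathcal{L}_X-\iota_X d$ to $\varphi(V)=\iota_V\vol_M$, I would use $\mathcal{L}_X\iota_V\vol_M=\iota_V\mathcal{L}_X\vol_M+\iota_{[X,V]_S}\vol_M$ together with $d\varphi(V)=\varphi(\psi V)$ to obtain, after applying $\varphi^{-1}$, the formula $\psi(X\wedge V)=(-1)^l[X,V]_S+(-1)^l\psi(X)\wedge V+X\wedge\psi(V)$, i.e. the lemma for $k=1$.

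For the inductive step I would write $U=X\wedge U'$ with $\deg U'=k-1$ and compute $\psi(U\wedge V)=\psi\big(X\wedge(U'\wedge V)\big)$ by the $k=1$ case applied to the second argument $U'\wedge V$. The bracket term $[X,U'\wedge V]_S$ is then split by the graded Leibniz rule (\ref{eq:-2}), the term $\psi(U'\wedge V)$ is expanded by the inductive hypothesis, and $\psi(U)=\psi(X\wedge U')$ is expanded by the $k=1$ case as well. Collecting everything and using the graded skew-symmetry of $[\cdot,\cdot]_S$, the terms should reassemble into $(-1)^l[U,V]_S+(-1)^l\psi(U)\wedge V+U\wedge\psi(V)$.

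The main obstacle is sign bookkeeping, not conceptual content. Every step depends on the fixed conventions for the contraction $\iota_{U\wedge V}$, for $\varphi^{-1}$ in (\ref{eq:-1}) and (\ref{eq:0}), and for the grading of the Schouten bracket, and the factors $(-1)^l$ appearing in two of the three terms arise only once these are pinned down consistently. In particular, in the inductive step one must verify that the extra signs produced by moving $X$ past $U'$ (of degree $k-1$) in the Leibniz expansion of $[X,U'\wedge V]_S$ and in $\psi(X\wedge U')$ combine to give precisely $(-1)^l$ rather than $(-1)^{l+k}$; this cancellation is the delicate point on which the whole computation hinges.
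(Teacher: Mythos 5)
Your proposal is correct and follows essentially the same route as the paper: both establish the case where one factor is a vector field via Cartan's formula and the commutator identity $[\mathcal{L}_X,\iota_V]=\iota_{[X,V]_S}$, and then induct on the degree of the other factor using the graded Leibniz rule for the Schouten bracket. The only inessential difference is that you peel a vector field off $U$ and induct on $k$, whereas the paper takes $V$ to be a vector field and inducts on $l$; by the graded skew-symmetry of $[\cdot,\cdot]_S$ these are mirror-image inductions, and the sign cancellation you flag as the delicate point (obtaining $(-1)^l$ rather than $(-1)^{l+k}$) does indeed work out.
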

\begin{proof}
  If $V\in\mathfrak{X}(M)$, we have
  \[\begin{split}
    \mathcal{L}_V\iota_U\vol_M&=\iota_{[V,U]_S}\vol_M+\iota_U\mathcal{L}_V\vol_M\\
    &=\iota_{[V,U]_S}\vol_M+\iota_Ud\iota_V\vol_M\\
    &=\iota_{[V,U]_S}\vol_M+\iota_Ud\varphi(V)\\
    &=\iota_{[V,U]_S}\vol_M+\iota_U\varphi(\psi(V))\\
    &=\iota_{[V,U]_S}\vol_M+\iota_U\iota_{\psi(V)}\vol_M\\
    &=\iota_{[V,U]_S}\vol_M+\iota_{\psi(V)U}\vol_M,
\end{split}\]
and thus we obtain
  \[\begin{split}
        \psi(U\wedge V)&=\varphi^{-1}d\varphi(U\wedge V)=\varphi^{-1}d(\iota_{U\wedge V}\vol_M)\\
        &=\varphi^{-1}d(\iota_V\iota_U\vol_M)=\varphi^{-1}(\mathcal{L}_V-\iota_Vd)(\iota_U\vol_M)\\
        &=\varphi^{-1}\mathcal{L}_V(\iota_U\vol_M)-\varphi^{-1}\iota_Vd(\iota_U\vol_M)\\
        &=\varphi^{-1}(\iota_{[V,U]_S}\vol_M+\iota_{\psi(V)U}\vol_M)-\varphi^{-1}\iota_V\varphi\varphi^{-1}d\varphi(U)\\
        &=\varphi^{-1}(\iota_{[V,U]_S}\vol_M+\iota_{\psi(V)U}\vol_M)-\varphi^{-1}\iota_{\psi(U)\wedge V}\vol_M\\
        &=[V,U]_S+\psi(V)U-\psi(U)\wedge V.
    \end{split}\]
  By using the induction with respect to $l$, we can complete the proof.
\end{proof}

\begin{corollary}\label{cor:0}
  For any Jacobi structure $(\pi,E)$ and $k\in\mathbb{Z}_{\geq1}$, we have
  \[\psi(\pi^k)=k\psi(\pi)\wedge\pi^{k-1}+k(k-1)E\wedge\pi^{k-1},\]
  \[\psi(\pi^k\wedge E)=-k\psi(\pi)\wedge\pi^{k-1}\wedge E+\pi^k\wedge\psi(E).\]
\end{corollary}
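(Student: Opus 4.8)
The plan is to derive both identities directly from \autoref{lem:1} together with the two defining relations of a Jacobi structure, $[\pi,\pi]_S=2E\wedge\pi$ and $[\pi,E]_S=0$. For the first identity I would argue by induction on $k$. The base case $k=1$ reads $\psi(\pi)=\psi(\pi)$, which is trivially true. For the inductive step, apply \autoref{lem:1} with $U=\pi^k\in\mathfrak{X}^{2k}(M)$ and $V=\pi\in\mathfrak{X}^2(M)$; since $\pi$ has even degree the factors $(-1)^l$ all equal $1$, so
\[\psi(\pi^{k+1})=[\pi^k,\pi]_S+\psi(\pi^k)\wedge\pi+\pi^k\wedge\psi(\pi).\]

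The key auxiliary input is the bracket $[\pi^k,\pi]_S$. Using the graded Leibniz rule \eqref{eq:-2}, the graded symmetry of the Schouten bracket, and $[\pi,\pi]_S=2E\wedge\pi$, a short induction gives $[\pi,\pi^k]_S=k[\pi,\pi]_S\wedge\pi^{k-1}=2kE\wedge\pi^k$; graded antisymmetry (the relevant sign works out to $+1$) then yields $[\pi^k,\pi]_S=2kE\wedge\pi^k$. Substituting this and the inductive hypothesis $\psi(\pi^k)=k\psi(\pi)\wedge\pi^{k-1}+k(k-1)E\wedge\pi^{k-1}$ into the recursion, and using that the even-degree factors $\pi^k$ and $\pi$ commute past $\psi(\pi)$ and past $E\wedge\pi^{k-1}$, I expect the coefficients to collect to $(k+1)\psi(\pi)\wedge\pi^k+(2k+k(k-1))E\wedge\pi^k=(k+1)\psi(\pi)\wedge\pi^k+k(k+1)E\wedge\pi^k$, which is exactly the claimed formula for $\pi^{k+1}$.

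For the second identity I would apply \autoref{lem:1} with $U=\pi^k$ and $V=E\in\mathfrak{X}^1(M)$, so that $l=1$ contributes a sign $-1$:
\[\psi(\pi^k\wedge E)=-[\pi^k,E]_S-\psi(\pi^k)\wedge E+\pi^k\wedge\psi(E).\]
Since $[\pi,E]_S=0$ and $[E,\cdot]_S$ acts as a derivation of the wedge product, $[\pi^k,E]_S$ vanishes. It then remains to substitute the first identity for $\psi(\pi^k)$; the term proportional to $E\wedge\pi^{k-1}\wedge E$ dies because $E\wedge E=0$, leaving precisely $-k\psi(\pi)\wedge\pi^{k-1}\wedge E+\pi^k\wedge\psi(E)$.

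The computations themselves are short; the only genuine care lies in the Koszul sign bookkeeping in the graded-commutative algebra $\mathfrak{X}^\bullet(M)$ and in the graded Leibniz and antisymmetry rules for $[\cdot,\cdot]_S$. Here the situation is favorable: all factors entering the calculation except $E$ and $\psi(\pi)$ have even degree, so most signs trivialize, and the main obstacle is simply organizing the two nested inductions — one for $\psi(\pi^k)$ and the auxiliary one for $[\pi,\pi^k]_S$ — so that the signs are tracked consistently.
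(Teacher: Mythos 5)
Your proposal is correct and follows essentially the same route as the paper: both rest on \autoref{lem:1}, the auxiliary computation $[\pi,\pi^k]_S=k[\pi,\pi]_S\wedge\pi^{k-1}=2kE\wedge\pi^k$ via the graded Leibniz rule, and, for the second identity, the vanishing of $[\pi^k,E]_S$ together with $E\wedge E=0$. The only cosmetic difference is that the paper unrolls the recursion for $\psi(\pi^k)$ by iterated expansion rather than packaging it as a formal induction.
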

\begin{proof}
  First, we have
  \[\begin{split}
        [\pi,\pi^k]_S&=[\pi,\pi\wedge\pi^{k-1}]_S=[\pi,\pi]_S\wedge\pi^{k-1}+\pi\wedge[\pi,\pi^{k-1}]_S\\
        &=2[\pi,\pi]_S\wedge\pi^{k-1}+\pi^2\wedge[\pi,\pi^{k-2}]_S\\
        &=\cdots\\
        &=(k-1)[\pi,\pi]_S\wedge\pi^{k-1}+\pi^{k-1}\wedge[\pi,\pi]_S\\
        &=k[\pi,\pi]_S\wedge\pi^{k-1}=2kE\wedge\pi^k.
    \end{split}\]
  Then from \autoref{lem:1} we obtain
  \[\begin{split}
    \psi(\pi^k)&=\psi(\pi^{k-1}\wedge\pi)\\
        &=[\pi^{k-1},\pi]_S+\psi(\pi^{k-1})\wedge\pi+\pi^{k-1}\wedge\psi(\pi)\\
        &=2(k-1)E\wedge\pi^{k-1}+\psi(\pi^{k-1})\wedge\pi+\pi^{k-1}\wedge\psi(\pi)\\
        &=2(k-1)E\wedge\pi^{k-1}+2(k-2)E\wedge\pi^{k-1}+\psi(\pi^{k-2})\wedge\pi^2+2\pi^{k-1}\wedge\psi(\pi)\\
        &=\cdots\\
        &=2\left(\sum_{i=1}^{k-1}i\right)E\wedge\pi^{k-1}+k\psi(\pi)\wedge\pi^{k-1}\\
        &=k\psi(\pi)\wedge\pi^{k-1}+k(k-1)E\wedge\pi^{k-1}.
    \end{split}\]
    Moreover,
    \[\begin{split}
      \psi(\pi^k\wedge E)&=-[\pi^k,E]_S-\psi(\pi^k)\wedge E+\psi(E)\pi^k\\
      &=-\psi(\pi^k)\wedge E+\psi(E)\pi^k\\
      &=-k\psi(\pi)\wedge\pi^{k-1}\wedge E+\psi(E)\pi^k
  \end{split}\]
  holds.
\end{proof}

For a Jacobi manifold $(M,\pi,E)$, we denote by $m$ a integer such that $\pi^m\neq0$ and $\pi^{m+1}=0$ holds, and by $q$ the codimension of $\mathcal{F}_{(\pi,E)}$. Then $q=n-2m$ holds for LCS-type regular Jacobi manifolds, and $q=n-2m-1$ holds for contact-type regular Jacobi manifolds. Hereafter we assume that $0<q<n$.

\subsection{Godbillon-Vey classes of LCS-type regular Jacobi manifolds}

Now we prove our main result for LCS-type regular Jacobi manifolds.

\begin{theorem}\label{thm:100}
  For an LCS-type regular Jacobi manifold $(M,\pi,E)$, we can take
    \[\alpha=\varphi\left(\frac{\pi^m}{m!}\right),\qquad\beta=\varphi[-\ast\pi^m,\pi^m]_S\]
    (so $\mathcal{F}_{(\pi,E)}$ is co-orientable), and 
    \[GV(\mathcal{F}_{(\pi,E)})=\big[\varphi(\iota_{(d\varphi[-\ast\pi^m,\pi^m]_S)^{q}}[-\ast\pi^m,\pi^m]_S)\big]\]
    holds.
\end{theorem}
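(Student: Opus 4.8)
The plan is to verify the three ingredients behind a Godbillon--Vey computation: that $\alpha=\varphi(\pi^m/m!)$ is a decomposable $q$-form whose kernel is $T\mathcal{F}_{(\pi,E)}$, that the $1$-form $\beta=\varphi[-\ast\pi^m,\pi^m]_S$ satisfies $d\alpha=\beta\wedge\alpha$, and finally that $\beta\wedge(d\beta)^q$ can be rewritten in the asserted closed form. Throughout, the idea is to pass between forms and multivector fields through $\varphi$ and $\psi=\varphi^{-1}d\varphi$, so that $d\alpha=\varphi\psi(\pi^m/m!)$ and the Leibniz rule of \autoref{lem:1} together with \autoref{cor:0} control every derivative.

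For the first ingredient I would argue geometrically. In the LCS case $E_x\in{\rm Im}\,\pi^\sharp_x$, so $T\mathcal{F}_{(\pi,E)}={\rm Im}\,\pi^\sharp$ has rank $2m$, and since $\pi^{m}\neq0$, $\pi^{m+1}=0$, the multivector $\pi^m$ spans $\wedge^{2m}T\mathcal{F}_{(\pi,E)}$ pointwise. Hence $\alpha=\iota_{\pi^m/m!}\vol_M$ is (locally) the transverse volume form, is decomposable, and has kernel exactly $T\mathcal{F}_{(\pi,E)}$; this shows $\alpha$ is a legitimate defining form and, by the Frobenius condition, $d\alpha=\beta_0\wedge\alpha$ for some $1$-form $\beta_0$. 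Equivalently $\psi(\pi^m)=m!\,\varphi^{-1}(d\alpha)$ is tangent to the foliation, a fact I will use below.

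The main step, and the principal obstacle, is to identify $\beta$. Here I would exploit that $\ast\pi^m\wedge\pi^m=\varphi^{-1}(1)$ by the normalization $\vol_M(\pi^m\wedge\ast\pi^m)=1$, so that $\psi(\ast\pi^m\wedge\pi^m)=\psi(\varphi^{-1}(1))=0$. Feeding this into \autoref{lem:1} collapses the left-hand side and yields the clean expression
\[[-\ast\pi^m,\pi^m]_S=\psi(\ast\pi^m)\wedge\pi^m+\ast\pi^m\wedge\psi(\pi^m).\]
Using \eqref{eq:0} to turn $\beta\wedge\alpha=\varphi[-\ast\pi^m,\pi^m]_S\wedge\varphi(\pi^m/m!)$ into a contraction, the problem reduces to computing $\iota_{\varphi(\pi^m)}[-\ast\pi^m,\pi^m]_S$. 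I expect the first summand $\psi(\ast\pi^m)\wedge\pi^m$ to drop out, because $\pi^m$ already exhausts all $2m$ leafwise directions while $\psi(\ast\pi^m)$ supplies only $q-1$ transverse ones, too few for the transverse $q$-form $\varphi(\pi^m)$ to contract; the second summand, using that $\iota_{\varphi(\pi^m)}\ast\pi^m$ equals the normalization constant $\vol_M(\pi^m\wedge\ast\pi^m)=1$ up to sign, and that $\psi(\pi^m)$ is tangent, should collapse to exactly $\psi(\pi^m)$. Matching constants and signs then gives $\beta\wedge\alpha=\frac{1}{m!}\varphi\psi(\pi^m)=d\alpha$. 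The delicate parts are the sign bookkeeping in \eqref{eq:0} and making the transversality/tangency argument invariant rather than frame-dependent; I would phrase it by observing that both the first summand and any non-tangent part of $\psi(\pi^m)$ would contribute to $d\alpha$ a component not divisible by $\alpha$, which the Frobenius condition forbids.

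Finally, with $d\alpha=\beta\wedge\alpha$ established, $GV(\mathcal{F}_{(\pi,E)})=[\beta\wedge(d\beta)^q]$ by the theorem of Godbillon and Vey. To reach the stated form I would apply the volume-form duality $\varphi(W)\wedge\gamma=\varphi(\iota_\gamma W)$ (the general version of \eqref{eq:0}, valid for $W\in\mathfrak{X}^{n-1}(M)$ and any form $\gamma$) with $W=[-\ast\pi^m,\pi^m]_S$ and $\gamma=(d\beta)^q=(d\varphi[-\ast\pi^m,\pi^m]_S)^q$. This rewrites $\beta\wedge(d\beta)^q$ as $\varphi(\iota_{(d\varphi[-\ast\pi^m,\pi^m]_S)^q}[-\ast\pi^m,\pi^m]_S)$, which is the asserted representative of the Godbillon--Vey class, up to checking that the accumulated sign is $+1$.
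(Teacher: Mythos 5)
Your proposal is correct, and its skeleton is the paper's: the same defining pair $(\alpha,\beta)$, the same observation that $\vol_M(\pi^m\wedge\ast\pi^m)=1$ forces $\psi(\ast\pi^m\wedge\pi^m)=0$, the same application of \autoref{lem:1} yielding $[-\ast\pi^m,\pi^m]_S=\psi(\ast\pi^m)\wedge\pi^m+\ast\pi^m\wedge\psi(\pi^m)$, the same degree-counting argument killing $\iota_\alpha(\psi(\ast\pi^m)\wedge\pi^m)$ (the paper's (\ref{eq:1})), and the same duality rewriting of $\beta\wedge(d\beta)^q$ at the end. The one genuinely different step is your justification of the collapse $\iota_\alpha(\ast\pi^m\wedge\psi(\pi^m))=\alpha(\ast\pi^m)\,\psi(\pi^m)$, i.e.\ the paper's (\ref{eq:2}), which hinges on $\psi(\pi^m)$ being tangent to the foliation: the paper derives this by Schouten calculus, using \autoref{cor:0} to write $0=\psi(\pi^{m+1})=(m+1)\psi(\pi)\wedge\pi^m+m(m+1)E\wedge\pi^m$ together with $E\wedge\pi^m=0$ (LCS-type means $E\in{\rm Im}\,\pi^\sharp$), whence $\psi(\pi)\wedge\pi^m=0$; you instead invoke Frobenius, $d\alpha=\beta_0\wedge\alpha$ for some $1$-form $\beta_0$, so that $\psi(\pi^m)=m!\,\varphi^{-1}(d\alpha)=\pm\,\iota_{\beta_0}\pi^m$ is a contraction of $\pi^m$ and hence leafwise. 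Both are sound (there is no circularity: integrability of ${\rm Im}\,\pi^\sharp$ is an a priori property of a regular LCS-type Jacobi structure, not something being proved here); your route is more economical, using only regularity and integrability rather than the Jacobi identities --- likewise your pointwise linear algebra ($\pi^m$ spans $\wedge^{2m}{\rm Im}\,\pi^\sharp$) replaces the paper's appeal to the local structure theorem --- whereas the paper's route produces the explicit formulas of \autoref{cor:0} that it reuses in the contact-type theorem and in the codimension-one section. Two small cautions: the invariant phrasing at the end of your third paragraph (``a component not divisible by $\alpha$'') is looser than the frame-counting argument preceding it and should be dropped in favor of the latter, and the deferred sign bookkeeping does need to be carried out, exactly as in the paper's final display.
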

\begin{proof}

By the local structure theorem for Jacobi manifolds \cite{dazord1991structure}, for any $x\in M$, there is a neighborhood $U\ni x$ such that $(U,\pi|_U,E|_U)$ is conformally equivalent to a Jacobi manifold 
\[\mathbb{R}^{2m}\times\mathbb{R}^{q}=\{(x_1,\cdots,x_{2m},y_1,\cdots,y_q)\},\] 
        \[\pi_{\mathbb{R}^{2m}\times\mathbb{R}^{q}}=\sum_{i=1}^{m}\frac{\partial}{\partial x_{2i-1}}\wedge\frac{\partial}{\partial x_{2i}},\quad E_{\mathbb{R}^{2m}\times\mathbb{R}^{q}}=0.\]
We regard $X_i:=\frac{\partial}{\partial x_i}$ as a vector field on $U$ for $1\leq i\leq 2m$. Since we assume that $q\geq 1$, we can extend them to a local frame $\{X_1,X_2,\cdots, X_n\}$ such that $\vol_M(X_1\wedge\cdots\wedge X_n)=1$ holds. Then we can locally express $\pi$ as $\pi=a\sum_{i=1}^m X_{2i-1}\wedge X_{2i}$ for some non-zero function $a\in C^\infty(U)$, and thus we obtain $\pi^m=m!a^mX_1\wedge\cdots\wedge X_{2m}$.

Let $\{\theta^1,\cdots,\theta^n\}$ be the dual frame of $\{X_1,\cdots, X_n\}$. Then 
\[\alpha:=\varphi\left(\frac{\pi^m}{m!}\right)=a^m\theta^{2m+1}\wedge\cdots\wedge\theta^n\]
holds. Therefore we have
\[\mathcal{I}:=\{\omega\in\Omega^1(M)\mid \omega\wedge\alpha=0\}={\rm span}\{\theta^{2m+1},\cdots,\theta^n\}\]
and
\[\{X\in\mathfrak{X}(M)\mid \omega(X)=0,\ \omega\in\mathcal{I}\}={\rm span}\{X_1,\cdots,X_{2m}\}={\rm Im}\pi^\sharp=\mathcal{F}_{(\pi,E)}\]
(note that $E=\frac{1}{a}\pi^\sharp(da)$ holds on $U$).

\vspace{0.1in}
Next we prove that the 1-form $\beta:=\varphi[-\ast\pi^m,\pi^m]_S$ satisfies $d\alpha=\beta\wedge\alpha$. We have
\[\beta\wedge\alpha=(-1)^{q-1}\alpha\wedge(\iota_{[\ast\pi^m,\pi^m]_S}\vol_M)=-\iota_{\iota_\alpha[-\ast\pi^m,\pi^m]_S}\vol_M,\]
and
\[d\alpha=d\varphi\left(\frac{\pi^m}{m!}\right)=\varphi\psi\left(\frac{\pi^m}{m!}\right)=\iota_{\psi\left(\frac{\pi^m}{m!}\right)}\vol_M,\]
hence it remains to show that
\[-\iota_\alpha[\ast\pi^m,\pi^m]_S=\psi\left(\frac{\pi^m}{m!}\right).\]
From \autoref{lem:1}, we have
\[\begin{split}
    0&=\varphi^{-1}d\iota_{\ast\pi^m\wedge\pi^m}\vol_M=\psi(\ast\pi^m\wedge\pi^m)\\
    &=[\ast\pi^m,\pi^m]_S+\psi(\ast\pi^m)\wedge\pi^m+\ast\pi^m\wedge\psi(\pi^m),
\end{split}\]
and thus
\[-\iota_\alpha[\ast\pi^m,\pi^m]_S=\iota_\alpha(\psi(\ast\pi^m)\wedge\pi^m)+\iota_\alpha(\ast\pi^m\wedge\psi(\pi^m))\]
holds. Then we show that 
\begin{equation}\label{eq:1}
  \iota_\alpha(\psi(\ast\pi^m)\wedge\pi^m)=0
\end{equation}
and
\begin{equation}\label{eq:2}
  \iota_\alpha(\ast\pi^m\wedge\psi(\pi^m))=\psi\left(\frac{\pi^m}{m!}\right).
\end{equation}

First, we can locally express $\pi^m$ as $\pi^m=m!a^m X_1\wedge\cdots\wedge X_{2m}$. Thus for any $U\in\mathfrak{X}^{q-1}(M)$ and $1\leq i\leq 2m$, we have $\alpha(X_i\wedge U)=\vol_M(\frac{\pi^m}{m!}\wedge X_i\wedge U)=0$, which implies (\ref{eq:1}) since $\alpha\in\Omega^q(M)$ and $\psi(\ast\pi^m)\in\mathfrak{X}^{q-1}(M)$.

\vspace{0.1in}
Next, we prove (\ref{eq:2}). From \autoref{cor:0} we have
\[0=\psi(\pi^{m+1})=(m+1)\psi(\pi)\wedge\pi^m+m(m+1)E\wedge\pi^m.\]
Moreover, $E\wedge\pi^m=0$ holds since $E\in{\rm Im}\pi^\sharp$. Thus we have $\psi(\pi)\wedge\pi^m=0$, and then
\[\iota_{\psi(\pi)}\alpha=\iota_{\psi(\pi)}\iota_{\frac{\pi^m}{m!}}\vol_M=0\]
holds. Therefore we have $(\ast\pi^m)\wedge\iota_\alpha(\psi(\pi^m))=0$ by \autoref{cor:0} and obtain
\[\iota_\alpha(\ast\pi^m\wedge\psi(\pi^m))=\alpha(\ast\pi^m)\psi(\pi^m)=\frac{\psi(\pi^m)}{m!}.\]

From the above, it can be concluded that the form $\beta\wedge(d\beta)^q$ represents the Godbillon-Vey class of $\mathcal{F}_{(\pi,E)}$, and from (\ref{eq:-1}) the form $\beta\wedge(d\beta)^q$ is described as
\[\begin{split}
    \beta\wedge(d\beta)^q&=\varphi\varphi^{-1}(\beta\wedge(d\beta)^q)=\varphi((-1)^{(n+1)(2q+1)}\iota_{\beta\wedge(d\beta)^q}\varphi^{-1}(1))\\
    &=\varphi(\iota_{(d\beta)^q}\varphi^{-1}(\beta))=\varphi(\iota_{(d\varphi[-\ast\pi^m,\pi^m]_S)^{q}}[-\ast\pi^m,\pi^m]_S).
\end{split}\]
\end{proof}

\subsection{Godbillon-Vey classes of contact-type regular Jacobi manifolds}

Next we prove for contact-type regular Jacobi manifolds.

\begin{theorem}\label{thm:101}
  For a contact-type regular Jacobi manifold $(M,\pi,E)$, we can take 
  \[\alpha=\varphi\left(\frac{\pi^m}{m!}\wedge E\right),\qquad\beta=\varphi[\ast(\pi^m\wedge E),\pi^m\wedge E]_S\]
  (so $\mathcal{F}_{(\pi,E)}$ is co-orientable), and
  \[GV(\mathcal{F}_{(\pi,E)})=\big[\varphi(\iota_{(d\varphi[\ast(\pi^m\wedge E),\pi^m\wedge E]_S)^{q}}[\ast(\pi^m\wedge E),\pi^m\wedge E]_S)\big]\]
  holds.
\end{theorem}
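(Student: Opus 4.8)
The plan is to run the proof of \autoref{thm:100} essentially line by line, adapting each step to the odd-dimensional leaves and keeping careful track of the one genuinely new feature: in the contact case $E\notin\textrm{Im}\,\pi^\sharp$, so that $E\wedge\pi^m\neq0$. First I would invoke the local structure theorem for Jacobi manifolds \cite{dazord1991structure}: near any point $(M,\pi,E)$ is conformally equivalent to the standard contact Jacobi structure on $\mathbb{R}^{2m+1}$, whose single leaf is all of $\mathbb{R}^{2m+1}$. Extending an adapted frame $X_1,\dots,X_{2m+1}$ of $T\mathcal{F}_{(\pi,E)}$ to a frame $X_1,\dots,X_n$ with $\vol_M(X_1\wedge\cdots\wedge X_n)=1$, one writes $\pi^m\wedge E=b\,X_1\wedge\cdots\wedge X_{2m+1}$ for a nowhere-vanishing $b$, so that $\alpha=\varphi(\frac{\pi^m}{m!}\wedge E)$ is, up to a nonzero factor, $\theta^{2m+2}\wedge\cdots\wedge\theta^n$ in the dual frame. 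This exhibits $\alpha$ as locally decomposable with annihilator exactly $T\mathcal{F}_{(\pi,E)}$; note in particular that $\iota_E\alpha=0$, since $E\wedge(\pi^m\wedge E)=0$, so $\alpha$ kills the Reeb direction as well.

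Next I would reduce $d\alpha=\beta\wedge\alpha$ to a pointwise identity exactly as in \autoref{thm:100}. By \eqref{eq:-1} one has $d\alpha=\varphi\psi(\frac{\pi^m}{m!}\wedge E)$, and $\beta\wedge\alpha=\pm\iota_{\iota_\alpha[\ast(\pi^m\wedge E),\pi^m\wedge E]_S}\vol_M$, so it suffices to prove $\iota_\alpha[\ast(\pi^m\wedge E),\pi^m\wedge E]_S=\pm\psi(\frac{\pi^m}{m!}\wedge E)$, with a sign fixed at the end (this is precisely why $\beta$ here carries $+\ast$ rather than the $-\ast$ of the LCS case). Since $\varphi(\ast(\pi^m\wedge E)\wedge(\pi^m\wedge E))=\vol_M(\ast(\pi^m\wedge E)\wedge(\pi^m\wedge E))=\pm1$ is constant, \autoref{lem:1} applied to $U=\ast(\pi^m\wedge E)$ and $V=\pi^m\wedge E$ gives $0=\psi(\ast(\pi^m\wedge E)\wedge(\pi^m\wedge E))$, which, using that $2m+1$ is odd, expresses $[\ast(\pi^m\wedge E),\pi^m\wedge E]_S$ as a combination of $\psi(\ast(\pi^m\wedge E))\wedge(\pi^m\wedge E)$ and $\ast(\pi^m\wedge E)\wedge\psi(\pi^m\wedge E)$.

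It then remains to contract with $\alpha$ and show the first term dies while the second reproduces $\psi(\frac{\pi^m}{m!}\wedge E)$. The first vanishing is a degree count, the analogue of \eqref{eq:1}: $\psi(\ast(\pi^m\wedge E))$ has degree $q-1$, while $\alpha$ supplies $q$ conormal covectors $\theta^{2m+2},\dots,\theta^n$ that annihilate every tangent direction $X_1,\dots,X_{2m+1}$; as the companion factor $\pi^m\wedge E$ is entirely tangent, there are fewer than $q$ transverse legs to absorb $\alpha$, so the contraction is zero. For the second term the key point, and the main obstacle, is that $\psi(\pi^m\wedge E)$ must be tangent to $\mathcal{F}_{(\pi,E)}$ even though now $E\wedge\pi^m\neq0$, so the LCS identity $\psi(\pi)\wedge\pi^m=0$ is unavailable. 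I would recover tangency from \autoref{cor:0} together with $\pi^{m+1}=0$: the relation $0=\psi(\pi^{m+1})=(m+1)\psi(\pi)\wedge\pi^m+m(m+1)E\wedge\pi^m$ gives $(\psi(\pi)+mE)\wedge\pi^m=0$, hence $\psi(\pi)+mE\in\textrm{Im}\,\pi^\sharp$ and therefore $\psi(\pi)\in T\mathcal{F}_{(\pi,E)}$. Since $\psi(E)=\varphi^{-1}d\varphi(E)$ is a function of degree $0$ (the divergence of $E$), \autoref{cor:0} shows $\psi(\pi^m\wedge E)=-m\,\psi(\pi)\wedge\pi^{m-1}\wedge E+\psi(E)\,\pi^m$ lies in $\wedge^{2m}T\mathcal{F}_{(\pi,E)}$. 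As all its legs are tangent while $\alpha$ is built from conormal covectors, only the term contracting $\alpha$ fully against $\ast(\pi^m\wedge E)$ survives, and with $\alpha(\ast(\pi^m\wedge E))=\frac{1}{m!}\vol_M((\pi^m\wedge E)\wedge\ast(\pi^m\wedge E))=\frac{1}{m!}$ this equals $\psi(\frac{\pi^m}{m!}\wedge E)$, the analogue of \eqref{eq:2}.

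Having verified that $(\alpha,\beta)$ is a defining pair, the Godbillon-Vey theorem gives $GV(\mathcal{F}_{(\pi,E)})=[\beta\wedge(d\beta)^q]$, and I would rewrite $\beta\wedge(d\beta)^q$ through \eqref{eq:-1} verbatim as in \autoref{thm:100}, obtaining $\beta\wedge(d\beta)^q=\varphi(\iota_{(d\beta)^q}\varphi^{-1}(\beta))=\varphi(\iota_{(d\varphi[\ast(\pi^m\wedge E),\pi^m\wedge E]_S)^q}[\ast(\pi^m\wedge E),\pi^m\wedge E]_S)$, the asserted formula. The only remaining work is sign bookkeeping—tracking the odd factors $(-1)^{2m+1}$ in \autoref{lem:1} and the parities in \eqref{eq:-1} and \eqref{eq:0}—which is exactly what forces the convention $+\ast(\pi^m\wedge E)$ in $\beta$. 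As an independent check I would reprove the statement by Poissonization: the contact-type $(M,\pi,E)$ admits a Poissonization on $M\times\mathbb{R}$ whose symplectic foliation is the pullback of $\mathcal{F}_{(\pi,E)}$ along the (surjective submersive) projection, so that Mikami's formula for regular Poisson manifolds \cite{mikami2000godbillon}, i.e. the $E=0$ instance underlying \autoref{thm:100}, combined with \autoref{lem:2}, descends to the class claimed on $M$.
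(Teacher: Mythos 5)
Your proposal is correct and takes essentially the same approach as the paper's proof: the local structure theorem exhibits $\alpha$ as a defining form for $\mathcal{F}_{(\pi,E)}$, \autoref{lem:1} applied to $\ast(\pi^m\wedge E)\wedge(\pi^m\wedge E)$ (with the odd-degree signs forcing the $+\ast$ convention) reduces everything to the analogues of \eqref{eq:3} and \eqref{eq:4}, which you settle by the same degree count and by \autoref{cor:0}, and the final rewriting via \eqref{eq:-1} is verbatim that of \autoref{thm:100}. The only immaterial variation is that you deduce tangency of $\psi(\pi)$ from $\psi(\pi^{m+1})=0$, i.e. $(\psi(\pi)+mE)\wedge\pi^m=0$, whereas the paper uses the equivalent identity $\psi(\pi)\wedge\pi^m\wedge E=0$ obtained from $\psi(\pi^{m+1}\wedge E)=0$; both rest on the same corollary.
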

\begin{proof}
  By the local structure theorem for Jacobi manifolds \cite{dazord1991structure}, for any $x\in M$, there is a neighborhood $U\ni x$ such that $(U,\pi|_U,E|_U)$ is equivalent to a Jacobi manifold 

  \[\mathbb{R}^{2m+1}\times\mathbb{R}^{q}=\{(x_0,x_1,\cdots,x_{2m},y_1,\cdots,y_q)\},\] 
        \[\pi_{\mathbb{R}^{2m+1}\times\mathbb{R}^{q}}=\sum_{i=1}^{m}\left(\frac{\partial}{\partial x_{2i-1}}-x_{2i}\frac{\partial}{\partial x_0}\right)\wedge\frac{\partial}{\partial x_{2i}},\quad E_{\mathbb{R}^{2m+1}\times\mathbb{R}^{q}}=\frac{\partial}{\partial x_0}.\]

  We regard $X_i:=\frac{\partial}{\partial x_i}$ as a vector field on $U$ for $0\leq i\leq 2m$. Since we assume that $q\geq 1$, we can extend them to a local frame $\{X_0,X_1,\cdots, X_{n-1}\}$ such that $\vol_M(X_0\wedge\cdots\wedge X_{n-1})=1$ holds. Then we can locally express $\pi^m\wedge E$ as $\pi^m\wedge E=m!X_0\wedge\cdots\wedge X_{2m}$.

  Let $\{\theta^0,\theta^1,\cdots,\theta^n\}$ be the dual frame of $\{X_0,X_1,\cdots, X_{n-1}\}$. Then 
  \[\alpha:=\varphi\left(\frac{\pi^m}{m!}\wedge E\right)=\theta^{2m+1}\wedge\cdots\wedge\theta^n\]
  holds. Therefore we have
  \[\mathcal{I}:=\{\omega\in\Omega^1(M)\mid \omega\wedge\alpha=0\}={\rm span}\{\theta^{2m+1},\cdots,\theta^n\}\]
  and
  \[\{X\in\mathfrak{X}(M)\mid \omega(X)=0,\ \omega\in\mathcal{I}\}={\rm span}\{X_0,\cdots,X_{2m}\}=\textrm{Im}\pi^\sharp+\mathbb{R}\langle E\rangle=\mathcal{F}_{(\pi,E)}.\]

\vspace{0.1in}
Next we prove that the 1-form $\beta:=\varphi[\ast(\pi^m\wedge E),\pi^m\wedge E]_S$ satisfies $d\alpha=\beta\wedge\alpha$. Similarly to the proof of \autoref{thm:100}, it remains to show that
\[\iota_\alpha[\ast(\pi^m\wedge E),\pi^m\wedge E]_S=\psi\left(\frac{\pi^m}{m!}\wedge E\right).\]
From \autoref{lem:1}, we have
\[\begin{split}
    0&=\varphi^{-1}d\iota_{\ast(\pi^m\wedge E)\wedge(\pi^m\wedge E)}\vol_M=\psi(\ast(\pi^m\wedge E)\wedge(\pi^m\wedge E))\\
    &=-[\ast(\pi^m\wedge E),\pi^m\wedge E]_S-\psi(\ast(\pi^m\wedge E))\wedge(\pi^m\wedge E)+\ast(\pi^m\wedge E)\wedge\psi(\pi^m\wedge E),
\end{split}\]
and thus
\[\iota_\alpha[\ast(\pi^m\wedge E),\pi^m\wedge E]_S=-\iota_\alpha(\psi(\ast(\pi^m\wedge E))\wedge(\pi^m\wedge E))+\iota_\alpha(\ast(\pi^m\wedge E)\wedge\psi(\pi^m\wedge E))\]
holds. Then we show that 
\begin{equation}\label{eq:3}
  \iota_\alpha(\psi(\ast(\pi^m\wedge E))\wedge(\pi^m\wedge E))=0
\end{equation}
and
\begin{equation}\label{eq:4}
  \iota_\alpha(\ast(\pi^m\wedge E)\wedge\psi(\pi^m\wedge E))=\psi\left(\frac{\pi^m}{m!}\wedge E\right).
\end{equation}

First, we can locally express $\pi^m\wedge E$ as $\pi^m\wedge E=m!X_0\wedge\cdots\wedge X_{2m}$. Thus for any $U\in\mathfrak{X}^{q-1}(M)$ and $0\leq i\leq 2m$, we have $\alpha(X_i\wedge U)=\vol_M(\frac{\pi^m}{m!}\wedge E\wedge X_i\wedge U)=0$, which implies (\ref{eq:3}) since $\alpha\in\Omega^q(M)$ and $\psi(\ast(\pi^m\wedge E))\in\mathfrak{X}^{q-1}(M)$.

\vspace{0.1in}
Next, we prove (\ref{eq:4}). From \autoref{cor:0} we have
\[0=\psi(\pi^{m+1}\wedge E)=-(m+1)\psi(\pi)\wedge\pi^m\wedge E+\psi(E)\pi^{m+1}=-(m+1)\psi(\pi)\wedge\pi^m\wedge E.\]
Thus we obtain $\psi(\pi)\wedge\pi^m\wedge E=0$, and then
\[\iota_{\psi(\pi)}\alpha=\iota_{\psi(\pi)}\iota_{\frac{\pi^m}{m!}\wedge E}\vol_M=0\]
holds. Therefore we have $\ast(\pi^m\wedge E)\wedge\iota_\alpha(\psi(\pi^m\wedge E))=0$ by \autoref{cor:0} and obtain

\[\iota_\alpha(\ast(\pi^m\wedge E)\wedge\psi(\pi^m\wedge E))=\alpha(\ast(\pi^m\wedge E))\psi(\pi^m\wedge E)=\frac{\psi(\pi^m\wedge E)}{m!}.\]

\end{proof}

\section{Viewpoint of Poissonizations}

In the case of contact-type regular Jacobi manifolds, \autoref{thm:101} can be derived by an alternative method, namely, using Poissonization. For any Jacobi structure $(\pi,E)$ on $M$, we can associate a Poisson structure $\Lambda$ on $\widetilde{M}:=M\times\mathbb{R}_{>0}$ defined by
\[\Lambda=\frac{1}{t}\pi+\frac{\partial}{\partial t}\wedge E.\]
The Poisson manifold $(\widetilde{M},\Lambda)$ is called the \textit{Poissonization} of $(M,\pi,E)$. For a regular Jacobi manifold, its Poissonization is a regular Poisson manifold. In the following, $n$ denotes the dimension of $M$.

\begin{lemma}
  For a contact-type regular Jacobi manifold $(M,\pi,E)$, the pullback foliation of the foliation $\mathcal{F}_{(\pi,E)}$ by the projection ${\rm pr}:\widetilde{M}\to M$ is the symplectic foliation of the Poisson manifold $(\widetilde{M},\Lambda)$.
\end{lemma}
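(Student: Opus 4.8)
The claim is that for a contact-type regular Jacobi manifold $(M,\pi,E)$, the pullback $\mathrm{pr}^\ast\mathcal{F}_{(\pi,E)}$ of the characteristic foliation along the projection $\mathrm{pr}:\widetilde{M}=M\times\mathbb{R}_{>0}\to M$ coincides with the symplectic foliation of the Poissonization $(\widetilde{M},\Lambda)$, where $\Lambda=\tfrac{1}{t}\pi+\tfrac{\partial}{\partial t}\wedge E$. Since both are regular foliations, it suffices to show they have the same tangent distribution at every point; the plan is to compute $T(\mathrm{pr}^\ast\mathcal{F}_{(\pi,E)})$ and $\mathrm{Im}\,\Lambda^\sharp$ separately and match them. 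I would first check that $\mathrm{pr}$ is transversal to $\mathcal{F}_{(\pi,E)}$ so that the pullback foliation is defined: since $\mathrm{pr}$ is a surjective submersion with $(d\,\mathrm{pr})$ surjective onto $TM$ everywhere, the transversality condition $(d\,\mathrm{pr})_{(x,t)}(T\widetilde{M})+T_x\mathcal{F}_{(\pi,E)}=T_xM$ holds trivially, and by the description recalled before \autoref{lem:2} we have $T(\mathrm{pr}^\ast\mathcal{F}_{(\pi,E)})=(d\,\mathrm{pr})^{-1}(T\mathcal{F}_{(\pi,E)})$.

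The core computation is to identify $\mathrm{Im}\,\Lambda^\sharp$ at a point $(x,t)$. Splitting a covector $\xi\in T^\ast_{(x,t)}\widetilde{M}$ as $\xi=\alpha+s\,dt$ with $\alpha\in T^\ast_xM$ and $s\in\mathbb{R}$, I would apply $\Lambda^\sharp$ termwise: the term $\tfrac{1}{t}\pi$ contributes $\tfrac{1}{t}\pi^\sharp(\alpha)\in\mathrm{Im}\,\pi^\sharp_x$, while $\tfrac{\partial}{\partial t}\wedge E$ contributes a combination of $E$ and $\tfrac{\partial}{\partial t}$ obtained by pairing against $\alpha$ and $s\,dt$. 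Carrying this out shows that $\mathrm{Im}\,\Lambda^\sharp_{(x,t)}$ is spanned by $\mathrm{Im}\,\pi^\sharp_x$, together with $E_x$ and $\tfrac{\partial}{\partial t}$, i.e.\ it equals $\big(\mathrm{Im}\,\pi^\sharp_x+\mathbb{R}\langle E_x\rangle\big)\oplus\mathbb{R}\langle\tfrac{\partial}{\partial t}\rangle = T_x\mathcal{F}_{(\pi,E)}\oplus\mathbb{R}\langle\tfrac{\partial}{\partial t}\rangle$. Here the contact-type hypothesis $E_x\notin\mathrm{Im}\,\pi^\sharp_x$ is exactly what guarantees the Poissonization is regular of the expected even rank $2m+2$, matching $\dim T\mathcal{F}_{(\pi,E)}+1=(2m+1)+1$.

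On the other hand, $(d\,\mathrm{pr})^{-1}(T\mathcal{F}_{(\pi,E)})$ consists of all vectors in $T_{(x,t)}\widetilde{M}=T_xM\oplus\mathbb{R}\langle\tfrac{\partial}{\partial t}\rangle$ whose $TM$-component lies in $T_x\mathcal{F}_{(\pi,E)}$, since $\ker(d\,\mathrm{pr})=\mathbb{R}\langle\tfrac{\partial}{\partial t}\rangle$; this is precisely $T_x\mathcal{F}_{(\pi,E)}\oplus\mathbb{R}\langle\tfrac{\partial}{\partial t}\rangle$. Comparing with the previous paragraph, the two distributions agree pointwise, and since each is the tangent distribution of a regular foliation, the foliations coincide. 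The only place demanding genuine care, rather than bookkeeping, is the computation of $\mathrm{Im}\,\Lambda^\sharp$: one must verify that the extra generator produced by the $\tfrac{\partial}{\partial t}\wedge E$ term is exactly $\tfrac{\partial}{\partial t}$ and that $E$ reappears among the images (so that no dimensions are lost or gained), which is where the factor $\tfrac{1}{t}$ and the contact-type condition $E\notin\mathrm{Im}\,\pi^\sharp$ do the real work. Everything else is the transversality check and the elementary description of $\ker(d\,\mathrm{pr})$.
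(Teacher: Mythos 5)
Your proposal is correct, and it establishes the key equality of distributions by a genuinely different mechanism than the paper. Both arguments start from the same termwise formula $\Lambda^\sharp(\alpha+s\,dt)=\frac{1}{t}\pi^\sharp(\alpha)+sE-\alpha(E)\frac{\partial}{\partial t}$ (the paper writes it as $\Lambda^\sharp(df)$ in coordinates), but the paper extracts from it only the easy inclusion $\mathrm{Im}\,\Lambda^\sharp\subset(d\,\mathrm{pr})^{-1}(T\mathcal{F}_{(\pi,E)})=T(\mathrm{pr}^\ast\mathcal{F}_{(\pi,E)})$ and then forces equality by a rank count: $\Lambda^{m+1}=(m+1)t^{-m}\pi^m\wedge\frac{\partial}{\partial t}\wedge E\neq0$ and $\Lambda^{m+2}=0$ give $\mathrm{rank}\,\Lambda^\sharp=2m+2=\mathrm{rank}\,T(\mathrm{pr}^\ast\mathcal{F}_{(\pi,E)})$, with the contact hypothesis entering as $\pi^m\wedge E\neq0$. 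You instead compute $\mathrm{Im}\,\Lambda^\sharp$ exactly, which requires the reverse inclusion that the paper's rank count sidesteps; that is precisely the step you flag as needing care, and it is completed as follows: since $E_x\notin\mathrm{Im}\,\pi^\sharp_x$, choose $\alpha_0\in T_x^\ast M$ vanishing on $\mathrm{Im}\,\pi^\sharp_x$ (equivalently $\alpha_0\in\ker\pi^\sharp_x$, by skew-symmetry of $\pi$) with $\alpha_0(E_x)=1$; then $\Lambda^\sharp(\alpha_0)=-\frac{\partial}{\partial t}$ and $\Lambda^\sharp(dt)=E$, and once $\frac{\partial}{\partial t}$ is known to lie in the image, the identity $\Lambda^\sharp(\alpha)+\alpha(E)\frac{\partial}{\partial t}=\frac{1}{t}\pi^\sharp(\alpha)$ shows $\mathrm{Im}\,\pi^\sharp_x\subset\mathrm{Im}\,\Lambda^\sharp_{(x,t)}$, giving $T_x\mathcal{F}_{(\pi,E)}\oplus\mathbb{R}\langle\frac{\partial}{\partial t}\rangle\subset\mathrm{Im}\,\Lambda^\sharp_{(x,t)}$. (One correction of emphasis: the factor $\frac{1}{t}$ does no real work beyond being nonzero; only the contact condition produces $\frac{\partial}{\partial t}$.) Your route is more elementary---pointwise linear algebra with no wedge-power bookkeeping---and it makes transparent exactly why the lemma fails for LCS-type manifolds: if $E\in\mathrm{Im}\,\pi^\sharp$, every covector killing $\mathrm{Im}\,\pi^\sharp$ also kills $E$, so $\frac{\partial}{\partial t}\notin\mathrm{Im}\,\Lambda^\sharp$, a point the paper must treat in a separate remark by a parity argument. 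What the paper's route buys is brevity and reuse: the formula for $\Lambda^{m+1}$ is needed again in \autoref{prop:0}, and inclusion-plus-equal-rank avoids the annihilator argument altogether.
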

\begin{proof}
  We can see that
  \[\Lambda^{m+1}=(m+1)\frac{1}{t^m}\pi^m\wedge\partialt\wedge E\]
  and $\Lambda^{m+1}\neq0,\ \Lambda^{m+2}=0.$ Thus $\rank\Lambda^\sharp=2m+2$ holds. On the other hand, we have $\rank T({\rm pr}^\ast\mathcal{F}_{(\pi,E)})=\rank T\mathcal{F}_{(\pi,E)}+1=2m+2$. Moreover, using a coordinate $((x_i),t)$ on $\widetilde{M}$ we have
  \[\begin{split}
    \Lambda^\sharp(df)&=\Lambda^\sharp\left(\displaystyle\sum_i\frac{\partial f}{\partial x_i}dx_i+\frac{\partial f}{\partial t}dt\right)\\
    &=\frac{1}{t}\displaystyle\sum_i\frac{\partial f}{\partial x_i}\pi^\sharp(dx_i)+\frac{\partial f}{\partial t}E-\left(\displaystyle\sum_i\frac{\partial f}{\partial x_i}dx_i(E)\right)\partialt
\end{split}\]
  for $f\in C^\infty(\widetilde{M})$, thus ${\rm pr}_\ast({\rm Im}\Lambda^\sharp)\subset T\mathcal{F}_{(\pi,E)}$ holds. Since $T({\rm pr}^\ast\mathcal{F}_{(\pi,E)})=({\rm pr}_\ast)^{-1}(T\mathcal{F}_{(\pi,E)})$, we obtain $T({\rm pr}^\ast\mathcal{F}_{(\pi,E)})={\rm Im}\Lambda^\sharp$.
\end{proof}

\begin{remark}
  The lemma above does not hold for LCS-type regular Jacobi manifolds. In fact, if the pullback foliation of $\mathcal{F}_{(\pi,E)}$ of an LCS-type regular Jacobi manifold is the symplectic foliation, both $\rank\mathcal{F}_{(\pi,E)}$ and $\rank f^\ast\mathcal{F}_{(\pi,E)}$ must be even. However,
  \[\rank\mathcal{F}_{(\pi,E)}=n-\textrm{codim}\mathcal{F}_{(\pi,E)}=n-\textrm{codim}f^\ast\mathcal{F}_{(\pi,E)}=\rank f^\ast\mathcal{F}_{(\pi,E)}-1\]
  holds, that is a contradiction.\End
\end{remark}

Let $(M,\pi,E)$ be a contact-type regular Jacobi manifold such that ${\rm rank}T\mathcal{F}_{(\pi,E)}=2m+1$. We take a volume form $\vol_M\wedge dt$ on $\widetilde{M}$ and fix it. Then we can define $\Phi:\mathfrak{X}^k(\widetilde{M})\to\Omega^{n+1-k}(\widetilde{M})$ in the same way as $\varphi$, and for non-zero multivector field $U\in\mathfrak{X}^k(\widetilde{M})$ we take a (not unique) multivector field $\star U\in\mathfrak{X}^{n+1-k}(\widetilde{M})$ in the same way as $\ast U\in\mathfrak{X}^{n-k} (M)$ for $U\in\mathfrak{X}^k (M)$. From the result of \cite{mikami2000godbillon}, a defining pair $(A,B)$ of the symplectic foliation of $(\widetilde{M},\Lambda)$ is given by
\[A=\Phi\left(\dfrac{\Lambda^{m+1}}{(m+1)!}\right),\qquad B=\Phi[-\star\Lambda^{m+1},\Lambda^{m+1}]_S.\]

\begin{proposition}\label{prop:0}
  Let $\beta=\varphi[\ast(\pi^m\wedge E),\pi^m\wedge E]_S\in\Omega^1(M)$. Define $A'\in\Omega^q(\widetilde{M})$ and $B'\in\Omega^1(\widetilde{M})$ as
  \[A'=e^{t^m}A,\qquad B'={\rm pr}^\ast((-1)^q\beta).\]Then $(A',B')$ is a defining pair of the symplectic foliation of $(\widetilde{M},\Lambda)$.
\end{proposition}
\begin{proof}

By using (\ref{eq:-3}) and (\ref{eq:-2}) repeatedly, we obtain
\[\begin{split}
    [-\star\Lambda^{m+1},\Lambda^{m+1}]_S&=-\left[\star\left(\frac{1}{t^m}\pi^m\wedge E\wedge \partialt\right),\frac{1}{t^m}\pi^m\wedge E\wedge \partialt\right]_S\\
    &=(-1)^{q+1}\left[t^m\ast(\pi^m\wedge E),\frac{1}{t^m}\pi^m\wedge E\wedge \partialt\right]_S\\
    &=(-1)^{q+1}\left[t^m\ast(\pi^m\wedge E),\frac{1}{t^m}\right]_S\wedge\pi^m\wedge E\wedge\partialt\\
    &\qquad+(-1)^{q+1}\frac{1}{t^m}\left[t^m\ast(\pi^m\wedge E),\pi^m\wedge E\wedge\partialt\right]_S\\
    &=(-1)^{q+1}\left[\ast(\pi^m\wedge E),\pi^m\wedge E\wedge \partialt\right]_S-\left[t^m,\pi^m\wedge E\wedge\partialt\right]_S\wedge\ast(\pi^m\wedge E)\\
    &=(-1)^{q+1}\left[\ast(\pi^m\wedge E),\pi^m\wedge E\right]_S\wedge \partialt-mt^{m-1}\pi^m\wedge E\wedge \ast(\pi^m\wedge E).
\end{split}\]

\noindent Hence we have
\[\begin{split}
    B&=\Phi[-\star\Lambda^{m+1},\Lambda^{m+1}]_S\\
    &=(-1)^q\varphi[\ast(\pi^m\wedge E),\pi^m\wedge E]_S-mt^{m-1}dt\\
    &=(-1)^q\textrm{pr}^\ast\beta-mt^{m-1}dt.
\end{split}\]

\noindent Therefore
\[\begin{split}
    B'\wedge A'&=(B+mt^{m-1}dt)\wedge e^{t^m}A\\
    &=e^{t^m}B\wedge A+mt^{m-1}e^{t^m}dt\wedge A\\
    &=e^{t^m}dA+mt^{m-1}e^{t^m}dt\wedge A=dA'
\end{split}\]follows (note that $dA=B\wedge A$ holds since $(A,B)$ is a defining pair). Clearly $A'$ defines the same foliation as $A$, and thus $(A',B')$ is a defining pair of the foliation.
\end{proof}

\autoref{prop:0} and \autoref{lem:2} implies that a form $(-1)^q\beta\wedge(d(-1)^q\beta)^q=\beta\wedge(d\beta)^q$ represents $GV(\mathcal{F}_{(\pi,E)})$.

\section{Codimension one case}

In this section, we discuss the case of regular Jacobi manifolds with codimension one in detail. Such Jacobi manifolds are further classified into two cases: namely, LCS-type regular Jacobi structures of rank $2m$ on $(2m+1)$-dimensional manifolds, and contact-type regular Jacobi structures of rank $2m+1$ on $(2m+2)$-dimensional manifolds.

\begin{proposition}\label{prop:1}
  \ 
  \begin{itemize}
    \item For an LCS-type regular Jacobi manifold $(M,\pi,E)$ whose foliation $\mathcal{F}_{(\pi,E)}$ is of codimension one,
    \[GV(\mathcal{F}_{(\pi,E)})=\big[\varphi(\iota_{\varphi[-\ast\pi^{m},\pi^m]_S}{\psi}[-\ast\pi^{m},\pi^m]_S)\big]\]
    holds.
    \item For a contact-type regular Jacobi manifold $(M,\pi,E)$ whose foliation $\mathcal{F}_{(\pi,E)}$ is of codimension one,
    \[GV(\mathcal{F}_{(\pi,E)})=\big[\varphi(-\iota_{\varphi[\ast(\pi^{m}\wedge E),\pi^m\wedge E]_S}{\psi}[\ast(\pi^{m}\wedge E),\pi^m\wedge E]_S)\big]\]
    holds.
  \end{itemize}
\end{proposition}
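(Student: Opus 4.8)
The plan is to derive both formulas as the $q=1$ specialisations of \autoref{thm:100} and \autoref{thm:101}, the only additional ingredient being the duality identity (\ref{eq:0}) together with the defining relation $\psi=\varphi^{-1}d\varphi$. Throughout, the point is that the interior-product manipulations are genuine equalities of representative forms, so each cohomology class is represented on the nose by the stated expression.

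First I would treat the LCS-type case. Write $P:=[-\ast\pi^m,\pi^m]_S$; as in the proof of \autoref{thm:100} one has $P\in\mathfrak{X}^{n-1}(M)$, hence $\varphi(P)\in\Omega^1(M)$ and $\psi(P)\in\mathfrak{X}^{n-2}(M)$. Since $q=1$, the power $(d\varphi(P))^{q}$ appearing in \autoref{thm:100} is simply $d\varphi(P)$, so that $GV(\mathcal{F}_{(\pi,E)})=[\varphi(\iota_{d\varphi(P)}P)]$. By the definition of $\psi$ we have $d\varphi(P)=\varphi(\psi(P))$, and the task reduces to transferring the operator $\varphi$ from $P$ onto $\psi(P)$ inside the interior product. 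This is exactly what (\ref{eq:0}) accomplishes: applying it with $U=\psi(P)$ (so $k=n-2$) and $V=P$ (so $l=n-1$), the first sign exponent $(n+k)(l+1)=(2n-2)n$ is even, while the second exponent $(n+1)(n+l)=(n+1)(2n-1)$ has the parity of $n+1$, so that
\[\iota_{\varphi(\psi(P))}P=(-1)^{n+1}\iota_{\varphi(P)}\psi(P).\]
For an LCS-type foliation of codimension one we have $q=n-2m=1$, i.e. $n=2m+1$ is odd, whence $(-1)^{n+1}=1$ and $\iota_{d\varphi(P)}P=\iota_{\varphi(P)}\psi(P)$; applying $\varphi$ yields the asserted formula.

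The contact-type case runs verbatim with $Q:=[\ast(\pi^m\wedge E),\pi^m\wedge E]_S\in\mathfrak{X}^{n-1}(M)$ in place of $P$, so the same instance of (\ref{eq:0}) gives $\iota_{\varphi(\psi(Q))}Q=(-1)^{n+1}\iota_{\varphi(Q)}\psi(Q)$. The only difference is the parity of $n$: here $q=n-2m-1=1$ forces $n=2m+2$ to be even, so $(-1)^{n+1}=-1$, and this is precisely the origin of the extra minus sign in the stated expression. I do not expect a genuine obstacle; the one thing to watch carefully is the sign bookkeeping in (\ref{eq:0})—in particular the correct reading of the degrees $k=n-2$, $l=n-1$—and the observation that the two codimension-one cases are distinguished exactly by the parity of $n$ imposed by $q=1$, which is what makes the sign differ between the two items.
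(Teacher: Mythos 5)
Your proof is correct and takes essentially the same route as the paper: both specialize the main theorems to $q=1$ and perform a single duality/sign computation, with the parity of $n$ (forced to be odd for LCS-type and even for contact-type when $q=1$) producing exactly the sign discrepancy between the two items. The only cosmetic difference is that you transfer $\varphi$ between slots via (\ref{eq:0}), whereas the paper applies (\ref{eq:-1}) to the $3$-form $\beta\wedge d\beta$ and factors the interior product before recognizing $\psi$; the bookkeeping is equivalent.
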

\begin{proof}
  We prove only for LCS-type (we can prove also for contact-type in the same way). By (\ref{eq:-1}) we have
  \[\begin{split}
    \varphi^{-1}(\beta\wedge d\beta)&=(-1)^{n+1}\iota_{\beta}\iota_{d\beta}\varphi^{-1}(1)\\
    &=\iota_\beta(\varphi^{-1}d\varphi[-\ast\pi^{m},\pi^m]_S)=\iota_{\varphi[-\ast\pi^{m},\pi^m]_S}{\psi}[-\ast\pi^{m},\pi^m]_S.
\end{split}\]
\end{proof}

The following is a direct implication of \autoref{prop:1}.
\begin{corollary}
  Assume that $\mathcal{F}_{(\pi,E)}$ has codimension one. Then the following holds.
  \begin{itemize}
    \item In the case of LCS-type, we have $GV(\mathcal{F})_{(\pi,E)}=0$ if the vector field $\ast\pi^m$ satisfies $\mathcal{L}_{\ast\pi^m}\pi=0$.
    \item In the case of contact-type, we have $GV(\mathcal{F})_{(\pi,E)}=0$ if the vector field $\ast(\pi^m\wedge E)$ satisfies $\mathcal{L}_{\ast(\pi^m\wedge E)}\pi=0$ and $\mathcal{L}_{\ast(\pi^m\wedge E)}E=0$.\hfill{$\Box$}
  \end{itemize}
\end{corollary}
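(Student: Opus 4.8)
The plan is to derive both vanishing criteria directly from the codimension-one formulas established in \autoref{prop:1}, by showing that under the stated hypotheses the Schouten bracket $[-\ast\pi^m,\pi^m]_S$ (resp.\ $[\ast(\pi^m\wedge E),\pi^m\wedge E]_S$) vanishes, which forces the corresponding $1$-form $\beta$ to be zero and hence its Godbillon--Vey representative $\beta\wedge d\beta$ to vanish. First I would treat the LCS-type case. The Schouten bracket expands as
\[
  [-\ast\pi^m,\pi^m]_S=-[\ast\pi^m,\pi^m]_S.
\]
The key observation is that for a vector field $X$ and a multivector $V$, the Schouten bracket $[X,V]_S$ coincides with the Lie derivative $\mathcal{L}_X V$. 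Since $\ast\pi^m$ is a vector field (as $\mathcal{F}_{(\pi,E)}$ has codimension one, so $\ast\pi^m\in\mathfrak{X}^1(M)$), I would write $[\ast\pi^m,\pi^m]_S=\mathcal{L}_{\ast\pi^m}\pi^m$. Then the Leibniz rule for the Lie derivative over the wedge product gives
\[
  \mathcal{L}_{\ast\pi^m}\pi^m=\sum_{i=1}^m \pi^{i-1}\wedge(\mathcal{L}_{\ast\pi^m}\pi)\wedge\pi^{m-i}=m\,(\mathcal{L}_{\ast\pi^m}\pi)\wedge\pi^{m-1},
\]
so the hypothesis $\mathcal{L}_{\ast\pi^m}\pi=0$ immediately yields $[\ast\pi^m,\pi^m]_S=0$, hence $\beta=0$ and $GV(\mathcal{F}_{(\pi,E)})=[\varphi(\iota_0\,0)]=0$.

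For the contact-type case the argument is parallel but requires handling the extra factor $E$. Here $\ast(\pi^m\wedge E)$ is again a vector field, so I would write $[\ast(\pi^m\wedge E),\pi^m\wedge E]_S=\mathcal{L}_{\ast(\pi^m\wedge E)}(\pi^m\wedge E)$ and expand via the Leibniz rule to obtain
\[
  \mathcal{L}_{\ast(\pi^m\wedge E)}(\pi^m\wedge E)
  =\big(\mathcal{L}_{\ast(\pi^m\wedge E)}\pi^m\big)\wedge E+\pi^m\wedge\big(\mathcal{L}_{\ast(\pi^m\wedge E)}E\big).
\]
The first term again factors through $\mathcal{L}_{\ast(\pi^m\wedge E)}\pi$ by the same computation as above, so it vanishes under the hypothesis $\mathcal{L}_{\ast(\pi^m\wedge E)}\pi=0$, while the second term vanishes under $\mathcal{L}_{\ast(\pi^m\wedge E)}E=0$. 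Both hypotheses together force the bracket to vanish, hence $\beta=0$ and the class is zero. This explains why the contact-type criterion needs two conditions where the LCS-type needs only one: the factor $E$ contributes a second term that must be separately annihilated.

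The main obstacle I anticipate is the careful verification that the Schouten bracket of a vector field with a higher multivector genuinely reduces to the Lie derivative and that the Leibniz expansion carries the stated signs, rather than any conceptual difficulty; given the graded Leibniz identity \eqref{eq:-2} and the defining convention $[X,f]_S=\mathcal{L}_X f$ for $X\in\mathfrak{X}^1(M)$, the sign factors $(-1)^{(k-1)l}$ all collapse to $+1$ when $k=1$, so the expansion is the ordinary (unsigned) Leibniz rule and the computation is routine. The only point demanding genuine care is confirming that codimension one indeed forces $\ast\pi^m$ and $\ast(\pi^m\wedge E)$ to be vector fields, which follows since $\ast U\in\mathfrak{X}^{n-k}(M)$ for $U\in\mathfrak{X}^k(M)$ and here $n-2m=1$ (LCS-type) or $n-(2m+1)=1$ (contact-type); once this degree bookkeeping is in place the two statements follow immediately from \autoref{prop:1}.
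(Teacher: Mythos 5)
Your proof is correct and is essentially the argument the paper leaves implicit when it calls the corollary a direct implication of \autoref{prop:1}: under the stated hypotheses the bracket $[\ast\pi^m,\pi^m]_S=\mathcal{L}_{\ast\pi^m}\pi^m=m(\mathcal{L}_{\ast\pi^m}\pi)\wedge\pi^{m-1}$ (resp.\ its contact analogue with the extra $E$-term) vanishes, so $\beta=0$ and the representative of the Godbillon--Vey class in \autoref{prop:1} is identically zero. Your degree bookkeeping (codimension one forces $\ast\pi^m$, $\ast(\pi^m\wedge E)\in\mathfrak{X}^1(M)$) and the identification of the Schouten bracket with a vector field as the Lie derivative under the paper's sign conventions are both accurate.
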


\begin{remark}
  For a manifold $M$ of dimension $n=3$ or $n=4$, the codimension $q$ of a regular Jacobi structure with a non-trivial Godbillon-Vey class is restricted to $q=1$ only. If $n=3$ and $q=1$, $M$ becomes an LCS-type regular Jacobi manifold. On the other hand, if $n=4$ and $q=1$, $M$ becomes a contact-type regular Jacobi manifold. \End
\end{remark}

  For a Poisson manifold $(M,\pi)$, a vector field $\psi(\pi)$ is called the \textit{modular vector field} of $(M,\pi)$ relative to the volume form $\vol_M$. $(M,\pi)$ is said to be \textit{unimodular} if $\psi(\pi)=0$ holds for some volume form. Mikami \cite{mikami2000godbillon} showed that if a Poisson manifold with codimension one symplectic foliation is unimodular, then the Godbillon-Vey class of the symplectic foliation vanishes. In what follows, we show a similar result for Jacobi manifolds.

\begin{corollary}\label{cor:1}
  Assume that $\mathcal{F}_{(\pi,E)}$ has codimension one. Then the following holds.
  \begin{itemize}
    \item If $(M,\pi,E)$ is a LCS-type regular Jacobi manifold and $\psi(\pi^m)=0$ holds, then we have $GV(\mathcal{F}_{(\pi,E)})=0$.
    \item If $(M,\pi,E)$ is a contact-type regular Jacobi manifold and $\psi(\pi)=0,\ \psi(E)=0$ holds, then we have $GV(\mathcal{F}_{(\pi,E)})=0$.
  \end{itemize}
\end{corollary}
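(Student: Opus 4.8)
The plan is to show that, under the stated unimodularity hypotheses, the $1$-form $\beta$ from the defining pair of \autoref{thm:100} (resp.\ \autoref{thm:101}) is a pointwise functional multiple of $\alpha$; since the foliation has codimension one, the Godbillon-Vey representative $\beta\wedge(d\beta)^{q}=\beta\wedge d\beta$ then vanishes identically. The mechanism is that in codimension one the dual $\ast\pi^{m}$ (resp.\ $\ast(\pi^m\wedge E)$) lies in $\mathfrak X^1(M)$, so applying $\psi$ to it produces a function rather than a higher multivector field.

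For the first statement the hypotheses $\dim M=3$ and $\rank T\mathcal F_{(\pi,E)}=2$ force $m=1$, so that $\alpha=\varphi(\pi)$, $\beta=\varphi[-\ast\pi,\pi]_S$, and \autoref{cor:0} gives $\psi(\pi^m)=\psi(\pi)=0$. First I would note that $\vol_M(\ast\pi\wedge\pi)$ equals the constant $1$, hence $\psi(\ast\pi\wedge\pi)=0$; expanding this by \autoref{lem:1} and dropping the term containing $\psi(\pi)=0$ leaves
\[
[-\ast\pi,\pi]_S=\psi(\ast\pi)\wedge\pi=g\,\pi,\qquad g:=\psi(\ast\pi)\in C^\infty(M),
\]
where crucially $\psi(\ast\pi)$ is a function because $\ast\pi\in\mathfrak X^1(M)$. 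Applying $\varphi$ gives $\beta=g\,\alpha$.

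For the second statement I would argue identically: \autoref{cor:0} together with $\psi(\pi)=\psi(E)=0$ yields $\psi(\pi^m\wedge E)=0$, and expanding $\psi(\ast(\pi^m\wedge E)\wedge(\pi^m\wedge E))=0$ by \autoref{lem:1} and dropping the term containing $\psi(\pi^m\wedge E)=0$ gives
\[
[\ast(\pi^m\wedge E),\pi^m\wedge E]_S=-h\,(\pi^m\wedge E),\qquad h:=\psi(\ast(\pi^m\wedge E))\in C^\infty(M),
\]
so that $\beta=-m!\,h\,\alpha$. Thus in both cases $\beta=f\alpha$ for some $f\in C^\infty(M)$, and I would conclude uniformly: since $d\alpha=\beta\wedge\alpha=f\,\alpha\wedge\alpha=0$, we have $d\beta=df\wedge\alpha$, whence $\beta\wedge d\beta=f\,\alpha\wedge df\wedge\alpha=0$ pointwise and $GV(\mathcal F_{(\pi,E)})=[\beta\wedge d\beta]=0$.

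The step I expect to need the most care is the sign-and-degree bookkeeping in the two applications of \autoref{lem:1}, and in particular checking that the cross term $\ast\pi^m\wedge\psi(\pi^m)$ truly disappears. In the contact case this is automatic once $\psi(\pi^m\wedge E)=0$, but in the LCS case \autoref{cor:0} only gives $\psi(\pi^m)=m(m-1)E\wedge\pi^{m-1}$ when $\psi(\pi)=0$, which vanishes exactly when $m=1$; this is precisely what the hypothesis $\dim M=3$ secures, and it explains why the first statement is confined to the three-dimensional case.
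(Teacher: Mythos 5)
Your proof is correct, and its finishing step takes a genuinely different route from the paper's. Both arguments share the same opening move: expand $0=\psi(\ast\pi^m\wedge\pi^m)$ (resp.\ $0=\psi(\ast(\pi^m\wedge E)\wedge(\pi^m\wedge E))$) via \autoref{lem:1}, and use \autoref{cor:0} plus the unimodularity hypotheses to kill the cross term, so that the Schouten bracket defining $\beta$ collapses to a function times $\pi^m$ (resp.\ $\pi^m\wedge E$) --- your closing observation that this collapse requires $m=1$ in the LCS case, because $\psi(\pi^m)=m(m-1)E\wedge\pi^{m-1}$ need not vanish for $m\geq 2$, correctly identifies why the first bullet is confined to dimension $3$, and matches the paper's own restriction. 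Where you diverge is in how you conclude: the paper feeds this collapse into the formula of \autoref{prop:1}, computes $\psi[-\ast\pi^m,\pi^m]_S$ by further Schouten-bracket identities, and then verifies by an explicit local-frame computation that the contraction $\iota_{\varphi(\psi(-\ast\pi^m)\pi^m)}[\psi(-\ast\pi^m),\pi^m]_S$ vanishes (the $1$-form is proportional to $\theta^{2m+1}$ while the bracket only involves $X_1,\dots,X_{2m}$). You instead observe that the collapse means $\beta=f\alpha$ for a smooth function $f$, and then finish with three lines of coordinate-free exterior algebra: $d\alpha=f\,\alpha\wedge\alpha=0$, so $d\beta=df\wedge\alpha$ and $\beta\wedge d\beta=f\,\alpha\wedge df\wedge\alpha=0$ pointwise. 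Your route is shorter, avoids both \autoref{prop:1} and local coordinates, and makes the geometric mechanism transparent (under the hypotheses the foliation is defined by a closed $1$-form, so its Godbillon--Vey representative vanishes identically, not merely in cohomology); the paper's route has the virtue of staying inside the tensorial formula of \autoref{prop:1}, which is the expression the paper is advertising, and of generalizing more directly to settings where one must analyze the contraction term rather than a proportionality.
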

\begin{proof}
  First, we prove for LCS-type. From \autoref{lem:1} and the assumption $\psi(\pi^m)=0$, we have

  \begin{equation}\label{eq:5}
    [-\ast\pi^m,\pi^m]_S=\psi(\pi^m)\wedge(-\ast\pi^m)-\psi(-\ast\pi^m)\pi^m=-\psi(-\ast\pi^m)\pi^m,
  \end{equation}and thus

  \begin{equation}\label{eq:6}
    \begin{split}
    \psi[-\ast\pi^m,\pi^m]_S&=-[\psi(-\ast\pi^m),\pi^m]_S-\psi^2(-\ast\pi^m)\pi^m-\psi(-\ast\pi^m)\wedge\psi(\pi^m)\\
    &=-[\psi(-\ast\pi^m),\pi^m]_S
  \end{split}
  \end{equation}holds. We locally express $\pi^m$ as $\pi^m=m!a^m X_1\wedge\cdots\wedge X_{2m}$. Then we have
  \[\varphi(\psi(-\ast\pi^m)\pi^m)=m!a^m\psi(-\ast\pi^m)\theta^{2m+1}\]
  and
  \[[\psi(-\ast\pi^m),\pi^m]_S=m!a^m\displaystyle\sum_{i=1}^{2m}(-1)^{i+1}[\psi(-\ast\pi^m),X_i]_SX_1\wedge\cdots\wedge\check{X_i}\wedge\cdots\wedge X_{2m}.\]

  \noindent Therefore we obtain
  \[\iota_{\varphi(\psi(-\ast\pi^m)\pi^m)}[\psi(-\ast\pi^m),\pi^m]_S=0.\]

  \noindent On the other hand, by (\ref{eq:5}) and (\ref{eq:6}), we have
  \[\begin{split}
    \varphi^{-1}(\beta\wedge d\beta)&=\iota_{\varphi[-\ast\pi^{m},\pi^m]_S}{\psi}[-\ast\pi^{m},\pi^m]_S\\
    &=\iota_{\varphi(\psi(-\ast\pi^m)\pi^m)}[\psi(-\ast\pi^m),\pi^m]_S,
  \end{split}\]and thus $GV(\mathcal{F}_{(\pi,E)})=0$ follows.

  \vspace{0.1in}
  Next, we prove for contact-type. As in the case of LCS-type, we have
  \[[\ast(\pi^m\wedge E),\pi^m\wedge E]_S=\psi(\pi^m\wedge E)\wedge\ast(\pi^m\wedge E)-\psi(\ast(\pi^m\wedge E))\pi^m\wedge E\]
  and
  \[\psi[\ast(\pi^m\wedge E),\pi^m\wedge E]_S=[\ast(\pi^m\wedge E),\psi(\pi^m\wedge E)]_S+[\psi(\ast(\pi^m\wedge E)),\pi^m\wedge E]_S.\]
  Under the assumption that $\psi(\pi)=0$ and $\psi(E)=0$, \autoref{cor:0} implies that $\psi(\pi^m\wedge E)=0$, and thus we obtain
  \[\varphi^{-1}(\beta\wedge d\beta)=\iota_{\varphi(\psi(\ast(\pi^m\wedge E))\pi^m\wedge E)}[\psi(\ast(\pi^m\wedge E)),\pi^m\wedge E]_S.\]
  Moreover,
  \[\iota_{\varphi(\psi(\ast(\pi^m\wedge E))\pi^m\wedge E)}[\psi(\ast(\pi^m\wedge E)),\pi^m\wedge E]_S=0\]
  holds similarly to the case of LCS-type.
\end{proof}

\begin{remark}
  In the Poisson case \cite{mikami2000godbillon}, the condition $\psi(\pi)=0$ automatically implies $\psi(\pi^m)=0$, so it suffices to assume $\psi(\pi)=0$. However, this is no longer the case for LCS-type regular Jacobi manifolds (see \autoref{cor:0}).\End
\end{remark}

\vspace{0.2in}
By defining a bracket on the space of sections of a line bundle, one can introduce the notion of a \textit{Jacobi bundle} \cite{marle1991jacobi}, which generalizes Jacobi manifolds. As in the case of Jacobi manifolds, a Jacobi bundle is known to induce a foliation. Extending the results of this paper to the foliation associated with a Jacobi bundle is a problem for future work.

\vspace{0.2in}
\bibliography{hoge} 
\bibliographystyle{alpha} 

\end{document}